\documentclass[12pt,a4paper,reqno]{amsart}
\usepackage{lineno}
\allowdisplaybreaks
\usepackage{amsmath}
\usepackage{amssymb}
\usepackage{amsfonts}
\usepackage{amsthm,amsfonts,amsthm,latexsym,enumerate,url,cases}
\numberwithin{equation}{section}
\usepackage{mathrsfs}
\usepackage{hyperref}
\hypersetup{colorlinks=true,citecolor=blue,linkcolor=blue,urlcolor=blue}
\def\pmod #1{\ ({\rm{mod}}\ #1)}

\theoremstyle{plain}
\newtheorem{theorem}{Theorem}

\newtheorem{lemma}{Lemma}
\newtheorem{problem}{Problem}

\newtheorem{conjecture}{Conjecture}
\theoremstyle{definition}

\newtheorem{remark}{Remark}

\usepackage{etoolbox}
\makeatletter
\patchcmd{\@settitle}{\uppercasenonmath\@title}{}{}{}
\patchcmd{\@setauthors}{\MakeUppercase}{}{}{}
\patchcmd{\section}{\scshape}{}{}{}
\makeatother

\begin{document}

\title
[On a conjecture on Romanoff type sumsets]
{On a conjecture on Romanoff type sumsets}

\author
[Y. Ding and L. Li] {Yuchen Ding and Liangxun Li}
\address{(Yuchen Ding$^{1,2}$) $^1$School of Mathematics,  Yangzhou University, Yangzhou 225002, People's Republic of China}
\address{$^2$HUN-REN Alfr\'ed R\'enyi Institute of Mathematics, Budapest, Pf. 127, H-1364 Hungary}
\email{ycding@yzu.edu.cn}

\address{(Liangxun Li$^{1,2}$) $^1$Mathematical Research Center, Shandong University, Jinan, Shandong 250100, People's Republic of China}
\address{$^2$HUN-REN Alfr\'ed R\'enyi Institute of Mathematics, Budapest, Pf. 127, H-1364 Hungary}
\email{lxli@mail.sdu.edu.cn}

\subjclass[2010]{11P32, 11A41, 11N36}

\keywords{Romanoff theorem, primes, powers of two, prime number theorem, sumsets}

\begin{abstract}
In this note, we generalize a 1950 result of P. Erd\H os on upper bounds of $k$-th moment of Romanoff type representation functions. As an application, we give a conditional proof of a recent conjecture of Y.-G. Chen on Romanoff type sumsets under the assumption of the Hardy-Littlewood conjecture.
\end{abstract}
\maketitle

\section{Introduction}
Let 
$
\mathcal{R}:=\{r_1<r_2<\cdots\}
$
be the set of odd integers which can be written as the sum of a prime and a power of two. 
Let $\mathbb{N}_{\text{odd}}$ be the set of odd numbers and
$$
\mathcal{N}=\mathbb{N}_{\text{odd}}\setminus \mathcal{R}.
$$
The old letters between Euler and Goldbach \cite{Euler} as well as de Polignac's conjecture \cite{Polignac1,Polignac2} inspired Romanoff \cite{Romanoff} to prove that there is an absolute constant $c_1>0$ such that 
$$
\mathcal{R}(x)>c_1x
$$
for all sufficiently large $x$, where $\mathcal{R}(x)$ is the number of elements of $\mathcal{R}$ not exceeding $x$.
Later, van der Corput \cite{Corput} proved that there is an absolute constant $c_2>0$ such that
$$
\mathcal{N}(x)>c_2x
$$
for all sufficiently large $x$.
Introducing concepts of the covering congruence systems Erd\H os \cite{Erdos2} showed that $\mathcal{N}$ contains an arithmetic progression. Erd\H os \cite{Erdos-95} asked whether the set $\mathcal{N}$ is composed by the union of an arithmetic progression and a zero density set. Very recently, Chen's remarkably interesting result \cite{Chen2} shows that the set $\mathcal{N}$ is not composed by the union of a finite number of arithmetic progressions and a zero density set, thus answering negatively Erd\H os' problem in a rather satisfactory way.

Erd\H os' result above implies that the gaps between the elements of $\mathcal{N}$ are bounded. In \cite{Chen}, Chen proved that
$$
\limsup_{n\rightarrow\infty}(r_{n+1}-r_n)\ge 8.
$$
Answering in the affirmative a 1977 problem of Erd\H os \cite[Page 51]{Erdos-77}, very recently Elsholtz, Planitzer and Schlage-Puchta \cite{EPSP} (via private communications) proved that
$$
\limsup_{n\rightarrow\infty}(r_{n+1}-r_n)=\infty.
$$
Actually, they gave a quantitative form involving the maximal order of $r_{n+1}-r_n$. The result of Elsholtz, Planitzer and Schlage-Puchta is rather surprising since the predicted lower density of $\mathcal{R}$ is pretty high approximately $0.437...$, thanks to the article of Del Corso, Corso, Dvornicich and Romani \cite{CCDR}. At present, the best record of the lower density of $\mathcal{R}$ is $0.110114$ given by Elsholtz and Schlage-Puchta \cite{ESP}.  

Let $\mathbb{N}$ be the set of natural numbers and $\mathbb{P}$ the set of primes.
Improving the lower bound of Yang and Chen \cite{Yang-Chen}, the first named author \cite{Ding} proved 
$$
\#\left\{n\le x: n=p+2^{m_1^2}+2^{m_2^2}, m_1, m_2\in \mathbb{N}, p\in \mathbb{P}\right\}\gg x.
$$
Let $r_1,r_2,\cdots, r_t$ be $t$ given positive integers. Chen and Xu \cite{Chen-Xu} proved that
$$
\#\left\{n\le x: n=p+2^{ m_1^{r_1}}+\cdots+2^{ m_t^{r_t}}, m_i\in \mathbb{N}, 1\le i\le r, p\in \mathbb{P}\right\}\gg x,
$$
provided that $r_1^{-1}+\cdots+r_t^{-1}\ge 1$. Their result was extended by Zhai and the first named author \cite{Ding-Zhai} to real cases which means that all of $r_i$ are allowed to be real numbers. In the extended real case, all the summands $2^{m_i^{r_i}}$ in Chen-Xu's result should be naturally adjusted to $2^{\lfloor m_i^{r_i}\rfloor}$. A shorter interval version of this representation was considered by Verwee and the first named author very recently \cite{Ding-V}. The representations of odd integers of the form $p+2^{m_1^2}+2^{m_2^2}$ were further investigated by Xu and Chen in two subsequent articles \cite{Xu-Chen1,Xu-Chen2}.

Recently, in \cite{Chen-new} Chen obtained some new results involving odd numbers of the representations 
$$
p+2^{ m_1^{r_1}}+\cdots+2^{ m_t^{r_t}}.
$$
He proved that almost all odd integers of this form have unique representations, provided that $r_1^{-1}+\cdots+r_t^{-1}< 1$. This phenomenon is not isolated. Actually, previously Chen \cite{Chen-old} proved that almost all odd numbers of the form $p+2^q$ have unique representations, where $p$ and $q$ are both primes.
From now on, let $r_1,\cdots, r_t$ be positive numbers with $r_i\ge 1~(1\le i\le t)$ and $r_1^{-1}+\cdots+r_t^{-1}\ge 1$. Set
\begin{align}\label{eq-1-1}
\mathcal{A}=\left\{2^{ \left\lfloor m_1^{r_1}\right\rfloor}+\cdots+2^{\left\lfloor m_t^{r_t}\right\rfloor}: m_i\in \mathbb{N}, 1\le i\le t\right\}
\end{align}
and
\begin{align}\label{eq-1-2}
\mathcal{R}_{\mathcal{A}}=\mathbb{P}+\mathcal{A}:=\big\{p+a: p\in \mathbb{P}, a\in \mathcal{A}\big\}.
\end{align}
Chen \cite[Conjecture 1.7]{Chen-new} made the following conjecture.

\begin{conjecture}[Chen]\label{conjecture-1}
If $r_1,\cdots, r_t$ are positive integers with $r_1^{-1}+\cdots+r_t^{-1}\ge 1$, then
 the set $\mathcal{R}_{\mathcal{A}}\cap(\mathcal{R}_{\mathcal{A}}+2)$ has a positive lower asymptotic density, where
 $$
 \mathcal{R}_{\mathcal{A}}+2=\big\{n+2: n\in\mathcal{R}_{\mathcal{A}}\big\}.
 $$
\end{conjecture}

The purpose of this note is to provide a conditional proof of Conjecture \ref{conjecture-1} under the assumption of a uniform Hardy-Littlewood conjecture (see e.g., \cite{Soundararajan}).

\begin{conjecture}[Weak uniform Hardy-Littlewood Conjecture]\label{conjecture-2}
There is a fixed $\delta>0$ such that the inequality
$$
\#\big\{p\le x: p, p+h\in \mathbb{P}\big\}\gg_\delta \frac{x}{(\log x)^2}
$$
holds uniformly for any even $h\le x^\delta$, provided that $x$ is sufficiently large.
\end{conjecture}

\begin{remark}\label{remark1}
The Hardy-Littlewood Conjecture on generalized twin primes was initially posed by Hardy and Littlewood \cite{Hardy}, which states for even $h$
$$
\#\big\{p\le x: p, p+h\in \mathbb{P}\big\}\sim \mathfrak{S}_h\frac{x}{(\log x)^2}
$$
as $x\to\infty$, where
$$
\mathfrak{S}_h:=\prod_{p\geq 3\atop p\nmid h}\Big(1-\frac{1}{(p-1)^2}\Big)\prod_{p\geq 2\atop p\mid h}\Big(1-\frac{1}{p}\Big)^{-1},
$$
which obeys the uniform lower bound $\mathfrak{S}_h\geq \mathfrak{S}_2\gg 1.$
 For the proof of our theorem, instead of the original asymptotic formula of generalized twin primes we need this weak uniform version of the Hardy-Littlewood Conjecture which only admits a lower bound. The uniformity of $h\le x^\delta$ for some small $\delta>0$ is not too far-fetched. In fact, Montgomery and Soundararajan \cite{Montgomery} investigated Cram\'er's conjecture \cite{Cramer} on prime gaps assuming a variant of the Hardy-Littlewood conjecture under the uniformity of $h\le x^{1-\varepsilon}$ for any $\varepsilon>0$. An even stronger conjecture was assumed in an article of Lemke Olivera and Soundararajan \cite{Soundararajan}. Maynard's remarkable results on prime tuples \cite{Maynard1,Maynard2} appear not to be applicable directly in this context (but we are not sure whether the method leading to the results therein could be used here). Actually, we do not even know how to prove unconditionally that the set $\mathcal{R}_{\mathcal{A}}\cap(\mathcal{R}_{\mathcal{A}}+2)$ contains infinitely many odd numbers. For the particular case $\mathcal{A}=\big\{2^m:m\in \mathbb{N}\big\}$, it is clear that there are $\gg x/\log x$ odd numbers $n=p+2$ such that $n+2=p+2^2$ are also the sums of a prime and a power of two.
\end{remark}

We now state our main result as the following theorem. 

\begin{theorem}\label{theorem-1}
Let $r_1,\cdots, r_t$ are positive numbers with $r_i\ge 1~(1\le i\le t)$ and 
$$
r_1^{-1}+\cdots+r_t^{-1}\ge 1.
$$
 Then
 the set $\mathcal{R}_{\mathcal{A}}\cap(\mathcal{R}_{\mathcal{A}}+2)$ has a positive lower asymptotic density under the assumption of Conjecture \ref{conjecture-2}.
\end{theorem}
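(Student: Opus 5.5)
We prove Theorem \ref{theorem-1} by a second-moment (Cauchy--Schwarz) argument of Romanoff type, applied to pairs rather than single representations. For odd $n\le x$ let
$$
f(n)=\#\Big\{(p,p',a,a'):\ p+a=n,\ p'+a'=n-2,\ p,p'\in\mathbb{P},\ a,a'\in\mathcal{A},\ a,a'\le x^{\delta'}\Big\}.
$$
Every $n$ with $f(n)>0$ lies in $\mathcal{R}_{\mathcal{A}}\cap(\mathcal{R}_{\mathcal{A}}+2)$. By Cauchy--Schwarz,
$$
\#\{n\le x: f(n)>0\}\ \ge\ \frac{\big(\sum_{n\le x}f(n)\big)^2}{\sum_{n\le x}f(n)^2}.
$$
It therefore suffices to show $\sum f(n)\gg x$ and $\sum f(n)^2\ll x$.

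\emph{First moment.} Sum over pairs $a,a'\in\mathcal{A}$ with $a,a'\le x^{\delta'}$. Since elements of $\mathcal{A}$ with at least one term $\ge 2$ are even, $h:=a-a'+2$ is even, and we take $h\ne 0$, $|h|\le x^{\delta}$. Then $n=p+a$ with $p'=p+h$, so each such pair contributes $\#\{p\le x/2: p,p+h\in\mathbb{P}\}\gg x/(\log x)^2$ by Conjecture \ref{conjecture-2}. Negative $h$ is handled by swapping roles. Next we count $a\le y=x^{\delta'}$ in $\mathcal{A}$, using the condition $\sum r_i^{-1}\ge 1$. The tuples $(m_1,\dots,m_t)$ with $2^{\lfloor m_i^{r_i}\rfloor}\le y/t$ number $\gg\prod(\log y)^{1/r_i}\gg\log y$. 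Because the $m_i$ can be ordered, distinct tuples give $\gg_t$ distinct values up to bounded multiplicity; this is a standard step in the Chen--Xu and Ding--Zhai arguments. Hence $|\mathcal{A}\cap[1,y]|\gg\log x$, there are $\gg(\log x)^2$ pairs $(a,a')$, and $\sum f(n)\gg x$. If $\sum r_i^{-1}>1$ the count is larger, so we first shrink to a sub-collection of the $m_i$ with exactly $\asymp\log y$ elements to keep the second moment under control.

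\emph{Second moment.} Here we need the generalization of Erd\H os's moment bound announced in the abstract, stated for our $\mathcal{A}$ and truncated at size $x^{\delta'}$. The quantity $\sum_n f(n)^2$ counts octuples $p_1+a_1=p_2+a_2$, $p_1'+a_1'=p_2'+a_2'$, $p_1+a_1=p_1'+a_1'+2$, all $\le x$. We drop the primality of some of the primes. The upper-bound sieve (Selberg/Brun) gives, for fixed distinct shifts, at most $\ll x(\log x)^{-3}\prod_{p\mid \Delta}(1+1/p)$ for a triple of primes $p,p+u,p+v$. Degenerate cases with coinciding shifts give lower-order or directly acceptable terms. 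Summing over the $\ll(\log x)^4$ quadruples $(a_1,a_2,a_1',a_2')$, it remains to show
$$
\sum_{a_1,a_2,a_1',a_2'}\ \prod_{p\mid \Delta(a)}\Big(1+\frac1p\Big)\ \ll(\log x)^{4},
$$
where $\Delta$ is a product of differences such as $a_1-a_2$ and $a_1-a_1'+2$. Following Romanoff and Erd\H os, we expand the product as $\sum_{d\mid\Delta} 1/d$ with $d$ squarefree and odd, and interchange summations. For each odd $d$ we must bound the number of quadruples with $d\mid a_i-a_j$, roughly by $(\log x)^4/\mathrm{ord}_d(2)$-type savings. This uses the periodicity of $2^{k}\bmod d$ together with equidistribution of $\lfloor m^{r}\rfloor$ modulo $\mathrm{ord}_d(2)$. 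We then close the argument with Romanoff's lemma $\sum_d 1/(d\,\mathrm{ord}_d(2))<\infty$, or its $k$-th moment variant.

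\emph{Main obstacle.} The hard step is this divisor-sum bound for sums of several powers $2^{\lfloor m^{r}\rfloor}$ with real $r$, i.e. the generalized Erd\H os $k$-th moment lemma. The difficulty is the distribution of $\lfloor m^{r}\rfloor$ modulo $\mathrm{ord}_d(2)$ for large $d$. I would first treat small $\mathrm{ord}_d(2)\le(\log x)^{c}$ via equidistribution (Weyl/van der Corput bounds for $m^{r}$) and bound the contribution of large orders trivially using $\sum_{\mathrm{ord}_d(2)>T}1/(d\,\mathrm{ord}_d(2))\to0$.
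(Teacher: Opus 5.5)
Your second-moment bookkeeping does not close, and as written this is fatal. For a generic quadruple $(a_1,a_2,a_1',a_2')$, the configuration counted by $f(n)^2$ contains four primes: $n-a_1$, $n-a_2$, $n-2-a_1'$ and $n-2-a_2'$. You sieve only three of them and take $x(\log x)^{-3}$ per quadruple. Combined with the divisor-sum estimate you aim for, this gives $\sum_n f(n)^2\ll x(\log x)^{-3}\cdot(\log x)^4=x\log x$, not $\ll x$.

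Choosing the size of the truncated set cannot repair this. With $N=|\mathcal{A}\cap[1,x^{\delta'}]|$, your bounds are $\sum_n f(n)\gg N^2x(\log x)^{-2}$ and $\sum_n f(n)^2\ll N^4x(\log x)^{-3}$. Cauchy--Schwarz then yields only $\#\{n\le x: f(n)>0\}\gg x/\log x$, whatever $N$ is.

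You must keep all four primes (Lemma~\ref{lem-2} with $g=3$). This gives $x(\log x)^{-4}$ times a singular series of the shape $\prod_{p\mid\Delta}(1+c/p)$, where $\Delta$ is the product of all six differences of the shifts $0$, $a_1-a_2$, $a_1-a_1'-2$, $a_1-a_2'-2$. You then split it by AM--GM into single differences; the shifts by $2$ are harmless, since one bounds the worst residue class.

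With this, $\sum_n f(n)^2\ll x(\log x)^{4(\sigma-1)}$ with $\sigma=\sum_i r_i^{-1}$. That matches $(\sum_n f(n))^2\gg x^2(\log x)^{4(\sigma-1)}$ for the full truncated set. Your shrinking step is therefore unnecessary, and it would in any case force you to redo the congruence estimates for an ad hoc subset. Configurations with a coincidence among the four primes lose a free $a$ and gain a factor $\log x$, a net factor $(\log x)^{1-\sigma}\le 1$.

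The paper sidesteps the mixed differences $a-a'\pm2$ by a second Cauchy--Schwarz, $\sum_n f_{\mathcal A}(n)^2f_{\mathcal A}(n+2)^2\le(\sum_n f_{\mathcal A}(n)^4)^{1/2}(\sum_n f_{\mathcal A}(n+2)^4)^{1/2}$. It then proves the fourth-moment bound of Theorem~\ref{propo:2}, in which only differences $a_s-a_h$ occur.

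Your sketch of the divisor-sum step also needs correction in two places; write $e_2(d)=\mathrm{ord}_d(2)$.

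\emph{Integer $r\ge 2$.} Here $m^r$ is not equidistributed modulo $q=e_2(d)$. A single class can contain $\asymp Mq^{-1/r}$ of the $m\le M$, e.g.\ the class $0$ when $q$ is an $r$-th power, so you only save $q^{-1/r}$. The paper uses Chen's bound (Lemma~\ref{lem-Chen}) together with the convergence of $\sum_{d\ \mathrm{odd}}\mu(d)^2B^{\omega(d)}d^{-1}e_2(d)^{-\varepsilon}$ for every $\varepsilon>0$ (Lemma~\ref{lem-ET}), applied with $\varepsilon=1/r$. Romanoff's series $\sum 1/(d\,e_2(d))$ would not suffice.

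\emph{Non-integer $r$.} Large orders are not disposed of by the tail of that series. A trivial count of the $m$ in a class has no factor $1/e_2(d)$ at the scale $(\log x)^{1/r}$. What has to be beaten is the full weight $\sum_{P^+(d)\le\log x}\mu(d)^2B^{\omega(d)}/d\asymp(\log\log x)^B$, where $B$ is the constant in the sieve weight ($B=k^3$ in the paper).

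What works is a uniform bound, which is in effect the dichotomy of Lemma~\ref{lem-4}. The values $\lfloor m^r\rfloor$ are distinct and $\ll\log x$, so a class contains $\ll\log x/e_2(d)+1$ of them. This is $\ll(\log x)^{1/r}(\log\log x)^{-B}$ once $e_2(d)\gg(\log x)^{1-1/r}(\log\log x)^{B}$. Below that threshold, the exponent-pair bound of Lemma~\ref{lem-3} still gives an acceptable contribution; its error term $e_2(d)^{1/r}$ would be fatal only for large orders.
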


It is worth emphasizing that we do not require $r$'s in Theorem \ref{theorem-1} to be integers which covers a wider range than the original conjecture.

\section{Proof of Theorem \ref{theorem-1}}
The proof of Theorem \ref{theorem-1}, among other things, is based on an extended high moments of representation functions which generalizes a 1950 result of Erd\H os \cite{Erdos2}.

For odd integer $n$, let
$$
f(n)=\#\left\{(p,m)\in \mathbb{P}\times \mathbb{N}:n=p+2^m\right\}.
$$
By the Cauchy-Schwarz inequality and the prime number theorem, Romanoff's theorem is equivalent to
$
\sum_{n\le x}f(n)^2\ll x.
$
 Erd\H os \cite{Erdos2} then took a further step, and established
$$
\sum_{n\le x}f(n)^k\ll_k x
$$
for any fixed integer $k$. Now, for odd integer $n$ let
$$
f_{\mathcal{A}}(n)=\#\left\{(p,a)\in \mathbb{P}\times \mathcal{A}:n=p+a\right\},
$$
where the set $\mathcal{A}$ is defined by (\ref{eq-1-1}). Motivated by Erd\H os, we will prove the following.

\begin{theorem}\label{propo:2}
For any fixed positive integer $k$, we have
$$
\sum_{n\le x}f_{\mathcal{A}}(n)^k\ll x (\log x)^{k\left(\frac{1}{r_1}+\cdots+\frac{1}{r_t}-1\right)},
$$
where the implied constant depends at most on $\mathcal{A}$ and $k$.
\end{theorem}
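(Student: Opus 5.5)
Following Erd\H os, we expand the $k$-th moment as a count of $k$-tuples of representations of the same $n$. We have
$$
\sum_{n\le x}f_{\mathcal{A}}(n)^k=\#\big\{(p_1,a_1,\dots,p_k,a_k): p_1+a_1=\cdots=p_k+a_k\le x\big\}.
$$
We fix the $k$-tuple $(a_1,\dots,a_k)\in(\mathcal{A}\cap[1,x])^k$ and count primes $p_1\le x$ such that $p_1+a_1-a_j$ is prime for all $j$. Write $\Delta=\{a_1-a_j\}$ for the set of distinct shifts, of size $s\le k$. If $s=1$ (all $a_j$ equal), the contribution is at most $\pi(x)\cdot\#(\mathcal{A}\cap[1,x])$. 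We have $\#(\mathcal{A}\cap[1,x])\ll\prod_i(\log x)^{1/r_i}$, since $2^{\lfloor m_i^{r_i}\rfloor}\le x$ forces $m_i\ll(\log x)^{1/r_i}$. This contribution is therefore $\ll x(\log x)^{\sigma-1}$ with $\sigma=\sum 1/r_i$, which is within the bound.

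For general $s$, the upper bound sieve (Selberg or Brun) gives
$$
\#\{p\le x: p+h\ \text{prime}\ \forall h\in \Delta\}\ll_k \frac{x}{(\log x)^{s}}\,\mathfrak{S}(\Delta),
\qquad
\mathfrak{S}(\Delta)\ll\prod_{p\mid D}\Big(1+\frac{c_k}{p}\Big),
$$
where $D=\prod_{h\ne h'}|h-h'|$ is a product of differences $a_i-a_j$. Thus the whole moment is bounded by $x(\log x)^{-k}$ times a sum over $(a_1,\dots,a_k)$ of $(\log x)^{k-s}\prod_{p\mid D}(1+c_k/p)$. The sum over tuples with $s=k$ (all distinct) should give $\ll(\log x)^{k\sigma}$. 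Coincidences among the $a_j$ reduce $s$ but also reduce the number of free tuples by a matching factor $(\log x)^{\sigma}$ each. Since $\sigma\ge1$ by hypothesis, the loss $(\log x)^{1}$ in the sieve is compensated. Grouping tuples by the partition of indices into equal values reduces everything to the distinct case with fewer variables.

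The main step is the average of the singular series: we aim for
$$
\sum_{a_1,\dots,a_s\in\mathcal{A}\cap[1,x]}\ \prod_{p\mid \prod_{i<j}(a_i-a_j)}\Big(1+\frac{c}{p}\Big)\ll (\log x)^{s\sigma}.
$$
Using H\"older, it suffices to bound $\sum_{a,a'}g(a-a')^{K}$ with $g(m)=\prod_{p\mid m}(1+c/p)$. We write $g(m)^K\le\sum_{d\mid m}\mu^2(d)\,\lambda(d)/d$ with $\lambda(d)\le C^{\omega(d)}$, which swaps the sum into
$$
\sum_{d\ \text{odd}}\frac{\mu^2(d)C^{\omega(d)}}{d}\,\#\{(a,a'): a\equiv a'\pmod d\}.
$$
(The power of $2$ in $a-a'$ is harmless: it contributes a bounded factor.) Here Romanoff's idea enters. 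Let $e(d)$ be the order of $2$ mod $d$. The residue of $a=\sum 2^{\lfloor m_i^{r_i}\rfloor}$ mod $d$ depends on the exponents mod $e(d)$. We need the equidistribution-type bound that the number of pairs with $a\equiv a'\pmod d$ is $\ll(\log x)^{2\sigma}\big(\tfrac{1}{e(d)^{\eta}}+\text{small}\big)$ for some $\eta>0$, or at least a bound enough to make $\sum_d C^{\omega(d)}/(d\,e(d)^{\eta})$ converge.

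For $t=1$ this is classical: the pairs with $\lfloor m^{r}\rfloor\equiv\lfloor m'^{r}\rfloor\pmod{e}$ number about $(\log x)^{2/r}/e$ when $e$ is small, plus the diagonal. For general $t$ we instead bound by fixing all but one coordinate of $a$ and $a'$, say the $t$-th, and count $m_t'$ in a congruence class for the $t$-th exponent. This gives a saving of roughly $\min(1, e(d)^{-1}+(\log x)^{-1/r_t}\cdots)$ and requires controlling the distribution of $\lfloor m^{r}\rfloor$ modulo $e$ for real $r\ge1$. For $r>1$ non-integer we would use a van der Corput/Weyl bound; for integer $r$ the count of residues is at least $e^{c}$. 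The case $e(d)$ large compared with $(\log x)^{1/r}$ is handled trivially by the diagonal, which contributes $(\log x)^{(2t-1)\cdots}$ and suffices because $\sum_d C^{\omega(d)}/(d\,e(d)^{\eta})<\infty$ (Erd\H os/Romanoff's lemma $\sum 1/(d\,e(d)^\eta)<\infty$, extended to the weights $C^{\omega(d)}$ via $\#\{d:e(d)=e\}\le 2^{e}$-type divisor bounds on $2^e-1$).

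I expect this last step to be the main obstacle: establishing the uniform equidistribution of $\lfloor m^{r}\rfloor$ modulo $e(d)$ with savings strong enough to sum over $d$, for real $r\ge 1$.
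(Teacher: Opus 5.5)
There is a genuine gap. Your outline follows the paper's architecture closely: expanding the moment, the upper-bound sieve with singular series $\prod_{p\mid D}(1+c_k/p)$, and the treatment of coincidences (your direct partition count is equivalent to the paper's induction on $k$ and uses $\sigma\ge 1$ the same way). It also matches the paper in the H\"older/AM--GM reduction to $\sum_{a\ne a'}g(a-a')^K$, the divisor expansion, and fixing all but one coordinate so that you must count $m\le(\log 2x/\log 2)^{1/r}$ with $2^{\lfloor m^{r}\rfloor}\equiv f\pmod d$. But the step you flag as the main obstacle is where the proof actually lives, and your proposed treatment of large $e_2(d)$ does not work.

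After fixing $2t-1$ coordinates, the only bound your scheme gives when $e_2(d)>\log 2x/\log 2$ is that the free coordinate takes at most one value. That is a count $\ll(\log x)^{2\sigma-1/r_t}$, with no decay in $d$. Your $d$-sum runs over all odd squarefree $d$ (effectively all $d\le x$), and $\sum_{d\le x}\mu^2(d)C^{\omega(d)}/d\asymp(\log x)^{C}$. So the convergence of $\sum_d C^{\omega(d)}/(d\,e_2(d)^{\eta})$ says nothing about these terms, and the same objection hits every additive error term in any equidistribution estimate. For $t=1$ the problem vanishes, because $d\mid 2^u-2^v$ iff $e_2(d)\mid u-v$, which is impossible once $e_2(d)>\log 2x/\log 2$. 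For $t\ge 2$ the congruence does not split coordinatewise.

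The missing idea is the truncation the paper borrows from Elsholtz--Luca--Planitzer. Since $0<|a-a'|\le x$ has at most $\log x/\log 2$ prime factors,
$\prod_{p\mid a-a',\,p>\log x}(1+C/p)\le(1+C/\log x)^{\log x/\log 2}\ll 1$.
So only $d$ with $P^+(d)\le\log x$ survive, and for these $\sum\mu^2(d)C^{\omega(d)}/d\le\prod_{p\le\log x}(1+C/p)\ll(\log\log x)^{C}$. Any additive error of size $(\log x)^{1/r-\epsilon}$ is then absorbed.

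You also still need uniform bounds for $\max_f\#\{m\}$, which your sketch does not supply. Knowing there are $\ge e^{c}$ distinct $r$-th power residues gives no uniform bound on fibre sizes.
\begin{itemize}
\item For integer $r$ the paper uses Chen's lemma: $\#\{y\le K: y^r\equiv a\pmod N\}\ll KN^{-1/r}+N^{\varepsilon}$, with at most one solution when $N\ge K^r$. This yields $\ll(\log x/e_2(d))^{1/r}+(\log x)^{\varepsilon}$.
\item For non-integer $r>1$ it uses an exponent-pair bound $\ll(\log x)^{1/r}/e_2(d)+(\log x)^{\theta}e_2(d)^{-\kappa/(1+\kappa)}+e_2(d)^{1/r}$ with $\theta<1/r$. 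Because the last term grows with $e_2(d)$, a further split is needed. The $d$ whose maximal count is $\le(\log x)^{1/r}/(\log\log x)^{C}$ are handled by the truncated divisor sum. For the remaining $d$, a gap argument forces $e_2(d)\ll(\log x)^{1-1/r}(\log\log x)^{C}$, which makes $e_2(d)^{1/r}$ negligible.
\end{itemize}
The main terms are then summed with the weighted Erd\H os--Tur\'an bound $\sum_{2\nmid d}\mu^2(d)B^{\omega(d)}/(d\,e_2(d)^{\varepsilon})<\infty$.
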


The arguments in \cite{Chen-Xu, Ding-Zhai} could essentially lead to the proof of Theorem \ref{propo:2} with $k=2$. But, for $k=2$ these two articles are slightly different both in their forms of presentations and in their proofs. We believe that the generality and concise form of Theorem \ref{propo:2} have the potential for further applications in Romanoff type sumsets.

We postpone the proof of Theorem \ref{propo:2} to the last section, and give first the proof of Theorem \ref{theorem-1} by Conjecture \ref{conjecture-2} and Theorem \ref{propo:2}. 
Let 
$$
\mathcal{W}_r=\{j: r_j=r, 1\le j\le t\},\ w_r=|\mathcal{W}_r|,\ \text{and}~w=\prod_{r=1}^{\infty}w_r!.
$$
Chen \cite[Lemma 2.1]{Chen-new} proved that
$$
\mathcal{A}(x)\sim \frac{1}{w}\left(\frac{\log x}{\log 2}\right)^{\frac{1}{r_1}+\cdots+\frac{1}{r_t}},
$$
provided that $r_1,\cdots, r_t$ are positive integers. For our purpose, a lower bound of $\mathcal{A}(x)$ which allows $r$'s to be positive numbers will be applicable enough.

\begin{lemma}\label{lem1}
Let $r_1,\cdots, r_t$ be positive number. Then for sufficiently large $x$ we have
$$
\mathcal{A}(x)\gg (\log x)^{\frac{1}{r_1}+\cdots+\frac{1}{r_t}},
$$
where the implied constant depends only on $r_1,\cdots, r_t$.
\end{lemma}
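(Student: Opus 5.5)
We lower-bound $\mathcal{A}(x)$ by exhibiting many distinct elements of $\mathcal{A}$ below $x$. It suffices to produce $\gg (\log x)^{1/r_1+\cdots+1/r_t}$ tuples $(m_1,\dots,m_t)$ whose images $2^{\lfloor m_1^{r_1}\rfloor}+\cdots+2^{\lfloor m_t^{r_t}\rfloor}$ are $\le x$ and pairwise distinct. Put $L=\log x/\log 2$.

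\emph{Size constraint.} Require
$$\lfloor m_i^{r_i}\rfloor \le L-\log_2 t \qquad (1\le i\le t).$$
Then every summand is at most $x/t$, so the sum is at most $x$. This constraint allows each $m_i$ to range over $1\le m_i\le c(L)^{1/r_i}$ for a constant $c>0$, once $x$ is large.

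\emph{Distinctness.} The obstacle is that different tuples can give the same sum. This happens through permutations, and through carries when two exponents coincide, since $2^e+2^e=2^{e+1}$. To avoid it, I restrict to tuples whose exponents $e_i=\lfloor m_i^{r_i}\rfloor$ are pairwise distinct and moreover lie in disjoint blocks. A sum of distinct powers of two determines its set of exponents by uniqueness of binary expansion. If I also fix the \emph{order}, say
$$e_i\in I_i:=\Bigl(\tfrac{i-1}{t}L',\,\tfrac{i}{t}L'\Bigr], \qquad L'=L-\log_2 t,$$
then the exponent set determines each $e_i$. Each $e_i$ in turn determines $m_i$ when $r_i\ge1$, because then $m\mapsto\lfloor m^{r_i}\rfloor$ is strictly increasing. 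For general positive $r_i<1$ this map is not injective, but it is at most $O(1+e^{1/r_i-1})$-to-one. In that case I would count distinct $e_i$ values instead of $m_i$ values, which is the main subtlety.

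\emph{Counting.} So I count, for each $i$, the number of distinct values $\lfloor m^{r_i}\rfloor$ lying in $I_i$. I expect this is the main point requiring care.

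For $r_i\ge1$, the number of $m$ with $m^{r_i}\in I_i$ is
$$\bigl(\tfrac{i}{t}L'\bigr)^{1/r_i}-\bigl(\tfrac{i-1}{t}L'\bigr)^{1/r_i}+O(1)\gg_{r,t} L^{1/r_i},$$
and these values are distinct.

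For $r_i<1$, every integer in $I_i$ is attained as some $\lfloor m^{r_i}\rfloor$, because consecutive values of $m^{r_i}$ differ by less than $1$ for large $m$. So there are $\gg L$ distinct values, and $L\ge L^{1/r_i}$ only fails in this regime. This means the stated bound cannot hold literally for $r_i<1$. Since the paper assumes $r_i\ge1$ throughout, I would prove the lemma under that assumption, which is all that is used later. (If one wanted general $r_i$, the exponent would have to become $\sum_i \min(1,1/r_i)$.)

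Multiplying over $i$ gives
$$\mathcal{A}(x)\ \ge\ \prod_{i=1}^t \#\{m:\lfloor m^{r_i}\rfloor\in I_i\}\ \gg\ \prod_{i=1}^t L^{1/r_i}\ \asymp\ (\log x)^{\frac1{r_1}+\cdots+\frac1{r_t}},$$
with the implied constant depending only on $r_1,\dots,r_t$ (and $t$).
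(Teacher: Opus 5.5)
Your proof is correct and is essentially the paper's argument: confine the exponents $\lfloor m_i^{r_i}\rfloor$ to $t$ disjoint blocks of length $\asymp \log x$, count $\gg (\log x)^{1/r_i}$ admissible $m_i$ in each block, and use uniqueness of binary expansions to see that the resulting sums are distinct and at most $x$. Your observation that the lemma as literally stated fails when some $r_i<1$ is also correct (e.g.\ $t=1$, $r_1=1/2$ gives $\mathcal{A}(x)\asymp \log x$): the paper's proof tacitly uses injectivity of $m\mapsto\lfloor m^{r_i}\rfloor$, i.e.\ its standing hypothesis $r_i\ge 1$, which is the only setting in which the lemma is applied.
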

\begin{proof}
For any $1\le i\le t$, we consider the number of integers $m_i$ such that
\begin{align}\label{eq-lem-1}
x^{\frac{i-1}{t+1}}\le 2^{\left\lfloor m_i^{r_i}\right\rfloor}< x^{\frac{i}{t+1}}.
\end{align}
It is clear that the number of such $m_i$ is 
$
\gg (\log x)^{\frac{1}{r_i}}.
$
Moreover, for integers $m_i$ in the range (\ref{eq-lem-1}) it is easy to see that all the sums below
$$
2^{\left\lfloor m_1^{r_1}\right\rfloor}+\cdots+2^{\left\lfloor m_t^{r_t}\right\rfloor}
$$
are different because of the unique binary representation of integers. Therefore,
$$
\mathcal{A}(x)\gg \prod_{1\le i\le t}(\log x)^{\frac{1}{r_i}}=(\log x)^{\frac{1}{r_1}+\cdots+\frac{1}{r_t}},
$$
proving our lemma.
\end{proof}

\begin{proof}[Proof of Theorem \ref{theorem-1}]
Let $\mathcal{R}_{\mathcal{A},2}=\mathcal{R}_{\mathcal{A}}\cap(\mathcal{R}_{\mathcal{A}}+2)$ for simplification. Using twice the Cauchy-Schwarz inequality we have
\begin{align}\label{eq-2-1}
\Big(\sum_{n\le x} f_{\mathcal{A}}(n)f_{\mathcal{A}}(n+2)\Big)^2&\le \Big(\sum_{\substack{n\le x\\ f_{\mathcal{A}}(n)f_{\mathcal{A}}(n+2)\ge 1}} 1^2 \Big)\Big(\sum_{n\le x} f_{\mathcal{A}}(n)^2f_{\mathcal{A}}(n+2)^2\Big)\nonumber\\
&\le \mathcal{R}_{\mathcal{A},2}(x)\sqrt{\sum_{n\le x} f_{\mathcal{A}}(n)^4}\sqrt{\sum_{n\le x} f_{\mathcal{A}}(n+2)^4}.
\end{align}
By Theorem \ref{propo:2}, we know
\begin{align}\label{eq-2-2}
\sum_{n\le x} f_{\mathcal{A}}(n)^4\ll x (\log x)^{4\left(\frac{1}{r_1}+\cdots+\frac{1}{r_t}-1\right)}
\end{align}
and
\begin{align}\label{eq-2-3}
\sum_{n\le x} f_{\mathcal{A}}(n+2)^4\le \sum_{n\le 2x} f_{\mathcal{A}}(n)^4 \ll x (\log x)^{4\left(\frac{1}{r_1}+\cdots+\frac{1}{r_t}-1\right)}.
\end{align}
Inserting  (\ref{eq-2-2}) and (\ref{eq-2-3}) into (\ref{eq-2-1}), we get
$$
\mathcal{R}_{\mathcal{A},2}(x)\gg \frac{\Big(\sum_{n\le x} f_{\mathcal{A}}(n)f_{\mathcal{A}}(n+2)\Big)^2}{x(\log x)^{4\left(\frac{1}{r_1}+\cdots+\frac{1}{r_t}-1\right)}}.
$$
Thus, to prove our theorem it suffices to show that
\begin{align}\label{eq-2-4}
\sum_{n\le x} f_{\mathcal{A}}(n)f_{\mathcal{A}}(n+2)\gg x(\log x)^{2\left(\frac{1}{r_1}+\cdots+\frac{1}{r_t}-1\right)}.
\end{align}

From now on, the symbols $p$'s will always be primes. Clearly,
\begin{align*}
\sum_{n\le x} f_{\mathcal{A}}(n)f_{\mathcal{A}}(n+2)&=\sum_{n\le x}\sum_{\substack{n=p_1+a_1\\ n+2=p_2+a_2\\ a_1, a_2\in \mathcal{A}}}1\\
&=\sum_{a_1, a_2\in \mathcal{A}}\sum_{p_1+a_1=p_2+a_2-2\le x}1\\
&\ge \sum_{\substack{a_1, a_2\in \mathcal{A}\\a_2<a_1\le x^\delta}}\sum_{\substack{p_1\le \frac{x}{2}\\ p_2=p_1+a_1-a_2+2}}1
\end{align*}
where $\delta$ is the one occurred in Conjecture \ref{conjecture-2}. By Conjecture \ref{conjecture-2}, we have
\begin{align*}
\sum_{\substack{p_1\le \frac{x}{2}\\ p_2=p_1+a_1-a_2+2}}1\gg \frac{x}{(\log x)^2}.
\end{align*}
Therefore, we conclude that
\begin{align*}
\sum_{n\le x} f_{\mathcal{A}}(n)f_{\mathcal{A}}(n+2)\gg \frac{x}{(\log x)^2}\sum_{\substack{a_1, a_2\in \mathcal{A}\\a_2<a_1\le x^\delta}}1=\frac{x}{(\log x)^2}\binom{\mathcal{A}\left(x^\delta\right)}{2}.
\end{align*}
By Lemma \ref{lem1} we know that
$$
\mathcal{A}\left(x^\delta\right)\gg (\log x)^{\frac{1}{r_1}+\cdots+\frac{1}{r_t}},
$$
from which we deduce that
$$
\sum_{n\le x} f_{\mathcal{A}}(n)f_{\mathcal{A}}(n+2)\gg x(\log x)^{2\left(\frac{1}{r_1}+\cdots+\frac{1}{r_t}-1\right)},
$$
proving (\ref{eq-2-4}).
\end{proof}

\section{$k$-th moment of Romanoff type representation functions}
In this section, we offer the proof of Theorem \ref{propo:2}.
We need a few more lemmas. The first one is a standard sieve result, see e.g., \cite[Theorem 2.4, p. 76]{Halberstam}.

\begin{lemma}\label{lem-2}
Let $g$ be a natural number, and let $q_i, b_i~(i=1, 2, \cdots, g)$ be integers satisfying
$$
E:=\prod_{1\le i\le g}q_i\prod_{1\le h<s\le g}\left(q_hb_s-q_sb_h\right)\neq 0.
$$
Let $\rho(p)$ be the number of solutions in $n$ modulo $p$ of
$$
\prod_{1\le i\le g}\big(q_in+b_i\big)\equiv 0\pmod{p}.
$$
Then
$$
\#\big\{p\le y: q_ip+b_i\in \mathbb{P}, 1\le i\le g\big\}\ll \prod_{p|E}\left(1-\frac{1}{p}\right)^{\rho(p)-g}\prod_{p|b_1\cdots b_g}\left(1-\frac{1}{p}\right)^{-1}\frac{y}{(\log y)^{g+1}},
$$
where the implied constant depends at most on $g$.
\end{lemma}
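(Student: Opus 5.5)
The plan is to deduce this from a standard upper-bound sieve of dimension $g+1$, either the Selberg sieve or Brun's pure sieve in the fundamental-lemma form, applied to the polynomial
$$
F(n)=n\prod_{1\le i\le g}\big(q_in+b_i\big).
$$
Let $z=y^{c}$ with a small $c=c(g)>0$. Every prime $p\in(z,y]$ with all $q_ip+b_i$ prime is an integer $n\le y$ such that $F(n)$ has no prime factor below $z$, apart from the boundedly many cases where one of the linear forms is itself a prime $<z$. So it suffices to bound
$$
S(y,z)=\#\{n\le y:\ (F(n),P(z))=1\},\qquad P(z)=\prod_{p<z}p,
$$
and the primes $p\le z$ contribute only $O(z)$, which is negligible.

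Let $\omega(p)$ denote the number of roots of $F$ modulo $p$. The root $n\equiv 0$ of the extra factor $n$ coincides with a root of $\prod(q_in+b_i)$ exactly when $p\mid b_1\cdots b_g$. Hence $\omega(p)=\rho(p)+1$ if $p\nmid b_1\cdots b_g$, and $\omega(p)=\rho(p)$ otherwise.

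If $\omega(p)=p$ for some $p$, then at most $O_g(1)$ values of $n$ survive, and the bound is trivial. Otherwise $\omega(p)\le p-1$. Moreover, $\omega(p)\le g+1$ for $p\nmid E$, and $\omega(p)\le g+1$ always once $p>g+1$, unless some $q_i\equiv0$, in which case $p\mid E$ anyway. Thus the sieve axioms for dimension $g+1$ hold: $\omega(p)$ is bounded on average and $\omega(p)<p$. The sieve upper bound then gives
$$
S(y,z)\ll_g y\prod_{p<z}\Big(1-\frac{\omega(p)}{p}\Big)+\sum_{d\mid P(z),\,d\le z^2}3^{\nu(d)}\,\omega(d).
$$
The remainder term is $\ll y^{1-c'}$ for $c$ small enough.

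The main computation compares the Euler product with $\prod_{p<z}(1-1/p)^{g+1}\asymp (\log y)^{-(g+1)}$. For $p\nmid E$ we have $\rho(p)=g$ and the $q_i$ are units mod $p$; also $p\nmid b_1\cdots b_g$ except for the primes dividing the $b$'s. So generically $\omega(p)=g+1$, and
$$
\Big(1-\frac{g+1}{p}\Big)\Big(1-\frac1p\Big)^{-(g+1)}=1+O(p^{-2}),
$$
whose product over $p$ converges to a constant depending on $g$. For $p\mid E$, we write
$$
1-\frac{\omega(p)}{p}\le \Big(1-\frac1p\Big)^{\omega(p)}\quad\text{up to a factor }1+O_g(p^{-2}),
$$
which contributes $(1-1/p)^{\rho(p)-g}$ relative to $(1-1/p)^{g+1}$ when $p\nmid b_1\cdots b_g$. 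For $p\mid b_1\cdots b_g$ the relative contribution is $(1-1/p)^{\rho(p)-g}(1-1/p)^{-1}$, which is precisely the second product in the statement. Small primes $p\le g+1$, where $1-\omega(p)/p$ may not be comparable to $(1-1/p)^{\omega(p)}$, are finitely many and are absorbed into the constant (given $\omega(p)<p$).

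The step I expect to need the most care is this bookkeeping of local factors. The cases $p\mid E$ with $p\mid b_1\cdots b_g$ versus $p\nmid b_1\cdots b_g$, together with the small primes, must be handled so that the implied constant depends only on $g$ and not on the $q_i,b_i$. Once that is done, the bound follows as stated.
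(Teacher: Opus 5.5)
Your proposal is correct and is essentially the standard proof of this lemma. The paper does not prove it but quotes it from Halberstam--Richert (Theorem 2.4), where it is obtained the same way: sieve $n\prod_i(q_in+b_i)$ as a $(g+1)$-dimensional problem, noting that the extra root $n\equiv 0$ merges with a root of $\prod_i(q_in+b_i)$ exactly when $p\mid b_1\cdots b_g$, which produces the factor $\prod_{p\mid b_1\cdots b_g}(1-1/p)^{-1}$.

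The bookkeeping you flagged is easier than you suggest. Bernoulli's inequality gives $1-\omega(p)/p\le(1-1/p)^{\omega(p)}$ exactly for every $p$, so no small-prime caveat is needed. Restricting the local factors to $p<z$ is harmless, because for $p\ge z$ each omitted factor is $\ge 1$ once no form vanishes identically modulo $p$. Finally, the exceptional primes with some $q_ip+b_i$ a prime below $z$ number $O_g(z)$ rather than $O(1)$, which is still negligible.
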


Extending an old result of Erd\H os and Tur\'an \cite{Erdos-Turan} as well a result of Erd\H os \cite[Eq. (14)]{Erdos2}, we give the following lemma.

\begin{lemma}\label{lem-ET}
For any odd integer $d$, let $e_2(d)$ be the least positive integer $m$ such that $2^m\equiv 1\pmod{d}$. Let $B$ and $\varepsilon$ be two given positive numbers. Then
$$
\sum_{d=1,\ 2\nmid d}^{\infty}\frac{\mu(d)^2B^{\omega(d)}}{d\big(e_2(d)\big)^\varepsilon}<\infty.
$$
\end{lemma}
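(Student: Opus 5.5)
We prove the convergence by converting the sum over $d$ into a sum over the value $m=e_2(d)$, following the strategy of Erd\H os and Tur\'an. Since $d$ is squarefree with $e_2(d)=m$, we have $d\mid 2^m-1$, so every such $d$ is a squarefree divisor of $2^m-1$. Hence
$$
\sum_{d=1,\ 2\nmid d}^{\infty}\frac{\mu(d)^2B^{\omega(d)}}{d\big(e_2(d)\big)^\varepsilon}\le \sum_{m=1}^{\infty}\frac{1}{m^{\varepsilon}}\sum_{\substack{d\mid 2^m-1\\ e_2(d)=m}}\frac{\mu(d)^2B^{\omega(d)}}{d}.
$$
The inner sum still has to be made small on average over $m$. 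The factor $m^{-\varepsilon}$ alone is not summable, so the inner sums must decay on average.

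The cleanest route is a partial summation in $m$. We bound the cumulative quantity
$$
S(M):=\sum_{\substack{d\ \text{odd}\\ e_2(d)\le M}}\frac{\mu(d)^2B^{\omega(d)}}{d}.
$$
Every $d$ counted here divides $\prod_{m\le M}(2^m-1)$, and that product has at most $\sum_{m\le M} m\ll M^2$ prime factors, since $2^m-1$ has at most $m$ of them. Also $\mu(d)^2B^{\omega(d)}/d$ is multiplicative, so
$$
S(M)\le \prod_{p\mid \prod_{m\le M}(2^m-1)}\Big(1+\frac{B}{p}\Big)\le \exp\Big(B\sum_{p\in\mathcal{P}_M}\frac1p\Big),
$$
where $\mathcal{P}_M$ is a set of at most $\ll M^2$ primes. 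The sum of $1/p$ over any set of $N$ primes is at most the sum over the first $N$ primes, which is $\log\log N+O(1)$. This gives $S(M)\ll (\log M)^{B}$.

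By Abel summation,
$$
\sum_{d}\frac{\mu(d)^2B^{\omega(d)}}{d\,e_2(d)^\varepsilon}=\sum_{M\ge1}\big(S(M)-S(M-1)\big)M^{-\varepsilon}\le \varepsilon\sum_{M}\frac{S(M)}{M^{1+\varepsilon}}+\lim_{M\to\infty} S(M)M^{-\varepsilon}.
$$
Since $S(M)\ll(\log M)^B$, the limit term vanishes and the series $\sum_M (\log M)^B M^{-1-\varepsilon}$ converges, which proves the lemma. This avoids needing any information about the individual inner sums.

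The main obstacle is the estimate $S(M)\ll(\log M)^B$. It requires that the primes dividing $2^m-1$ for $m\le M$ are few, namely $O(M^2)$, and that the worst case for $\sum 1/p$ over a prime set of given size is the initial segment of primes. Both facts are elementary: the first follows from $\omega(2^m-1)\le m$, and the second from Mertens' theorem. I expect no genuine difficulty beyond writing these steps carefully.
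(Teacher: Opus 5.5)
Your proof is correct and follows essentially the same route as the paper: group the $d$ by the value of $e_2(d)$, bound the cumulative sum $g(t)$ (your $S(M)$) by $\prod_{p\mid\prod_{k\le t}(2^k-1)}(1+B/p)\ll(\log t)^{O(B)}$ because that product has only $O(t^2)$ prime factors, and finish by partial summation against $t^{-\varepsilon}$. Your observation that a sum of $1/p$ over any $N$ primes is at most the sum over the first $N$ primes is exactly what justifies the paper's bound by $\prod_{p\le c_0t^2\log 2}(1+B/p)$; the paper's intermediate relaxation to all $d\le 2^{t^2}$ loses that information and does not by itself give that bound, so your write-up is the cleaner version of this step.
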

\begin{proof}
For $k=e_2(d)$, by the definition we have $2^k\equiv 1\pmod{d}$. Hence,
\begin{align}\label{eq-ET-0}
\sum_{d=1,\ 2\nmid d}^{\infty}\frac{\mu(d)^2B^{\omega(d)}}{d\big(e_2(d)\big)^\varepsilon}=\sum_{k=2}^{\infty}\frac{1}{k^\varepsilon}\sum_{\substack{d=1\\ e_2(d)=k}}^{\infty}\frac{\mu(d)^2B^{\omega(d)}}{d}.
\end{align}
It is well-known (see, e.g., \cite{Montgomery2}) that there is an absolute constant $c_0>e^e$ such that
$$
\omega(d)<c_0\log d,
$$
provided that $d>1$. For any $t\ge 2$ we have
\begin{align}\label{eq-ET-1}
g(t):=\sum_{k\le t}\sum_{\substack{d=1\\ e_2(d)=k}}^{\infty}\frac{\mu(d)^2B^{\omega(d)}}{d}&\le \sum_{\substack{d=1\\ d| \prod_{k\le t}\left(2^k-1\right)}}^{\infty}\frac{\mu(d)^2B^{\omega(d)}}{d}\nonumber\\
&\le \sum_{\substack{d\le 2^{t^2}}}^{\infty}\frac{\mu(d)^2B^{\omega(d)}}{d}\nonumber\\
&\le \prod_{p\le c_0t^2\log 2}\left(1+\frac{B}{p}\right)\nonumber\\
&=\exp\bigg(\sum_{p\le c_0t^2\log 2}\log \Big(1+\frac{B}{p}\Big)\bigg)\nonumber\\
&<\exp\Big(B\sum_{p\le c_0t^2\log 2} 1/p\Big).
\end{align}
By Mertens formula (see, e.g.,  \cite[Theorem 2.7]{Montgomery2}),  we have
\begin{align}\label{eq-ET-2}
\sum_{p\le c_0t^2\log 2}\frac{1}{p}=\log\log (c_0t^2\log 2)+\gamma+O(1/\log k)< \widetilde{c_0}\log\log t,
\end{align}
where $\gamma$ is the Meissel–Mertens constant and $\widetilde{c_0}$ is an absolute constant.
Combining (\ref{eq-ET-1}) with (\ref{eq-ET-2}), for any $t\ge 2$ we get
\begin{align}\label{eq-ET-3}
\sum_{k\le t}\sum_{\substack{d=1,\ d|2^k-1}}^{\infty}\frac{\mu(d)^2B^{\omega(d)}}{d}\le (\log t)^{B_1},
\end{align}
where $B_1=B\widetilde{c_0}$. Hence, by (\ref{eq-ET-0}), (\ref{eq-ET-3}) and partial summations we have
\begin{align*}
\sum_{d=1,\ 2\nmid d}^{\infty}\frac{\mu(d)^2B^{\omega(d)}}{d\big(e_2(d)\big)^\varepsilon}=\varepsilon\int_{2}^{\infty}
\frac{g(t)}{t^{1+\varepsilon}}{\rm d}t<\infty,
\end{align*}
proving our lemma.
\end{proof}

The following technical lemma is quoted from \cite[inequality between (2.16) and (2.17), p. 171]{Ding-Zhai}.
\begin{lemma}\label{lem-3}
Let $(\kappa, \lambda)$ be an exponent pair and $d$ an odd integer.  Suppose that $r_1>1$ is not an integer. Then 
$$
\max_{f}\sum_{\substack{2^{ \lfloor m_1^{r_1}\rfloor}\equiv f\pmod{d}\\ m_1\le (\log 2x/\log 2)^{\frac{1}{r_1}}}}1\ll \frac{(\log x)^{\frac{1}{r_1}}}{e_2(d)}+(\log x)^{\frac{r_1\kappa+\lambda}{r_1(1+\kappa)}}e_2(d)^{-\frac{\kappa}{1+\kappa}}+e_2(d)^{\frac{1}{r_1}},
$$
where the implied constant depends at most on $r_1$.
\end{lemma}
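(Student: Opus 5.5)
The plan is to reduce the congruence condition to a condition on $\lfloor m_1^{r_1}\rfloor$ modulo $e:=e_2(d)$, and then count lattice points with a $\psi$-function (sawtooth) argument in which the resulting exponential sums are bounded by the exponent pair $(\kappa,\lambda)$.

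\textbf{Reduction to a residue class mod $e$.} Since $d$ is odd, the powers $2^j \bmod d$ are purely periodic in $j$ with exact period $e$. Hence $2^{\lfloor m_1^{r_1}\rfloor}\equiv f\pmod d$ forces $\lfloor m_1^{r_1}\rfloor\equiv c\pmod e$ for a single residue $c=c(f)$, or the condition has no solution at all. It therefore suffices to bound, uniformly in $c$,
$$N(c):=\#\{m\le M:\ \lfloor m^{r_1}\rfloor\equiv c \pmod e\},\qquad M:=(\log 2x/\log 2)^{1/r_1}.$$
Writing $\psi(y)=y-\lfloor y\rfloor-\tfrac12$, the indicator of $\lfloor m^{r_1}\rfloor\equiv c\pmod e$ equals
$$\Big\lfloor \tfrac{m^{r_1}-c}{e}\Big\rfloor-\Big\lfloor \tfrac{m^{r_1}-c-1}{e}\Big\rfloor=\frac1e+\psi\Big(\tfrac{m^{r_1}-c-1}{e}\Big)-\psi\Big(\tfrac{m^{r_1}-c}{e}\Big).$$
Summing over $m$, the main term is $M/e\ll (\log x)^{1/r_1}/e_2(d)$. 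It remains to bound $\sum_{m}\psi\big((m^{r_1}-b)/e\big)$ for $b=c,\,c+1$.

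\textbf{Handling small $m$ and dyadic ranges.} For $m\le e^{1/r_1}$ we count trivially; this contributes the term $e_2(d)^{1/r_1}$. For the remaining $m$ we split into dyadic blocks $N<m\le 2N$ with $e^{1/r_1}<N\le M$. On each block we apply Vaaler's (or Vinogradov's) truncated expansion of $\psi$ with a parameter $H$:
$$\sum_{N<m\le 2N}\psi(g(m))\ll \frac NH+\sum_{h\le H}\frac1h\Big|\sum_{N<m\le 2N}e(hg(m))\Big|,\qquad g(m)=\frac{m^{r_1}-b}{e}.$$
Because $r_1$ is not an integer, no derivative of $m^{r_1}$ vanishes identically. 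Every derivative of $hg$ then has the exact size expected of a phase with "frequency" $T=hN^{r_1}/e$, which is precisely the hypothesis needed to apply the exponent pair $(\kappa,\lambda)$. This gives
$$\sum_{N<m\le 2N}e(hg(m))\ll (hN^{r_1-1}/e)^{\kappa}N^{\lambda}$$
(together with the harmless term $e/(hN^{r_1-1})$ from the first-derivative test, which is controlled once $N^{r_1}\ge e$).

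\textbf{Optimising $H$.} The block total is therefore $\ll N/H+H^{\kappa}(N^{r_1-1}/e)^{\kappa}N^{\lambda}$. We choose $H$ with
$$H^{1+\kappa}=N^{1-\lambda-(r_1-1)\kappa}e^{\kappa}.$$
If this $H$ is less than $1$, the trivial bound $N$ is already dominated by the other terms. With this choice both terms equal $N^{(r_1\kappa+\lambda)/(1+\kappa)}e^{-\kappa/(1+\kappa)}$. Summing over the dyadic blocks is dominated by $N\asymp M$, which yields $(\log x)^{\frac{r_1\kappa+\lambda}{r_1(1+\kappa)}}e_2(d)^{-\frac{\kappa}{1+\kappa}}$.

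The main obstacle is the bookkeeping at the edges of the exponent-pair step: ensuring that the frequency $T$ is $\ge 1$ and that the derivative conditions hold uniformly, which is why the cutoff at $m\asymp e^{1/r_1}$ is needed, and treating blocks where the optimal $H$ falls below $1$. All implied constants depend only on $r_1$ (and the fixed exponent pair).
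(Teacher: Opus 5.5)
Your proof is correct. The paper gives no argument of its own for this lemma; it quotes the inequality from Ding--Zhai. Your derivation is the standard route to exactly this three-term bound, and so it supplies what the paper outsources. The steps are: reduce to $\lfloor m^{r_1}\rfloor\equiv c\pmod{e_2(d)}$, using that $j\mapsto 2^j \bmod d$ has exact period $e_2(d)$; write the indicator via $\psi$; apply Vaaler's lemma and the exponent pair to the phase $hm^{r_1}/e_2(d)$, whose derivatives all have their natural size because $r_1\notin\mathbb{Z}$; then balance $H$ on dyadic blocks.

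One point of wording needs correcting. The term $e_2(d)/(hN^{r_1-1})$ is not harmless block by block: at $N\asymp e_2(d)^{1/r_1}$ it is as large as the trivial bound $N$. After summing over $h$ it is $\ll e_2(d)N^{1-r_1}$. What bounds its total by $\ll e_2(d)^{1/r_1}$ is the geometric sum over dyadic $N\ge e_2(d)^{1/r_1}$, which converges because $r_1>1$; this is also where the third term of the lemma comes from.
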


Let $\mu(d)$ be the M\"obius function, $\omega(d)$ the number of different prime factors of $d$, and $P^+(d)$ the largest prime factor of $d$.

\begin{lemma}\label{lem-4}
Let $k>0 $ be a given number. Suppose that $r_1>1$ is not an integer. Then we have
$$
\sum_{\substack{d=1,\ 2\nmid d \\P^+(d)\le \log x}}^{\infty}\frac{\mu(d)^2 k^{3\omega(d)}}{d}\max_{f}\sum_{\substack{2^{ \lfloor m_1^{r_1}\rfloor}\equiv f\pmod{d}\\ m_1\le (\log 2x/\log 2)^{\frac{1}{r_1}}}}1\ll 
(\log x)^{\frac{1}{r_1}},
$$
provided that $x$ is sufficiently large, where the implied constant depends at most on $r_1$.
\end{lemma}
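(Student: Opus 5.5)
Insert the bound of Lemma \ref{lem-3} into the sum and treat the three resulting terms separately. Write $L=\log x$ and $\alpha=\frac{r_1\kappa+\lambda}{r_1(1+\kappa)}$. Lemma \ref{lem-3} bounds the left-hand side of Lemma \ref{lem-4} by
$$
\ll L^{\frac{1}{r_1}}S_1+L^{\alpha}S_2+S_3,
$$
where
$$
S_1=\sum_{\substack{2\nmid d\\ P^+(d)\le L}}\frac{\mu(d)^2k^{3\omega(d)}}{d\,e_2(d)},\quad S_2=\sum_{\substack{2\nmid d\\ P^+(d)\le L}}\frac{\mu(d)^2k^{3\omega(d)}}{d\,e_2(d)^{\kappa/(1+\kappa)}},\quad S_3=\sum_{\substack{2\nmid d\\ P^+(d)\le L}}\frac{\mu(d)^2k^{3\omega(d)}e_2(d)^{1/r_1}}{d}.
$$

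The first two sums are harmless. Apply Lemma \ref{lem-ET} with $B=k^3$, first with $\varepsilon=1$ and then with $\varepsilon=\kappa/(1+\kappa)>0$. This gives $S_1,S_2\ll 1$ uniformly in $x$, since the condition $P^+(d)\le L$ only removes terms. So the first term is $\ll L^{1/r_1}$, as required. For the second term we need $\alpha<1/r_1$, or at worst $\alpha\le 1/r_1$. This holds when $\kappa r_1+\lambda<1+\kappa$. Since $r_1>1$ is not an integer, this is a choice of exponent pair, and I expect the source \cite{Ding-Zhai} selects it that way. The trivial pair $(0,1)$ gives $\alpha=1/r_1$, and this already suffices for an upper bound of order $L^{1/r_1}$. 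Taking $\kappa=0$ in $S_2$ leaves $e_2(d)^0$, and then $S_2$ is not covered by Lemma \ref{lem-ET}. So I would instead use a pair with $\kappa>0$ and $\kappa(r_1-1)<1-\lambda$; for example $(1/2,1/2)$ works when $r_1<2$. If no pair satisfies this for larger $r_1$, I would fall back to truncating.

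The main obstacle is $S_3$, where $e_2(d)$ appears with a positive exponent. I would use the trivial bound $e_2(d)\le\phi(d)<d$ together with the smoothness restriction. The squarefree $d$ with $P^+(d)\le L$ satisfy $d\le\prod_{p\le L}p=e^{(1+o(1))L}$, which is useless on its own. So I would split according to $e_2(d)$. If $e_2(d)\le L$, then $e_2(d)^{1/r_1}\le L^{1/r_1}$ and the remaining sum $\sum\mu(d)^2k^{3\omega(d)}/d$ over $P^+(d)\le L$ is $\ll(\log L)^{k^3}$. This bound is too big, so the split should be refined as $e_2(d)^{1/r_1}=e_2(d)^{1/r_1+\varepsilon}e_2(d)^{-\varepsilon}$. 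If $e_2(d)\le L^{1-\eta}$, then Lemma \ref{lem-ET} gives a contribution $\ll L^{(1-\eta)(1/r_1+\varepsilon)}\ll L^{1/r_1}$ once $\varepsilon$ is small.

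Large $e_2(d)$ needs a different idea. Note that $m_1\le (\log 2x/\log 2)^{1/r_1}$, so the inner count is trivially at most $\ll L^{1/r_1}$. Hence it is better to use $\min\big(L^{1/r_1},\text{Lemma \ref{lem-3} bound}\big)$ from the start. Where $e_2(d)\ge L^{1-\eta}$, I bound the inner count by $L^{1/r_1}$ and the weight by
$$
\frac{\mu(d)^2k^{3\omega(d)}}{d}\le\frac{\mu(d)^2k^{3\omega(d)}}{d}\Big(\frac{e_2(d)}{L^{1-\eta}}\Big)^{-\varepsilon}\cdot\Big(\frac{e_2(d)}{L^{1-\eta}}\Big)^{\varepsilon},
$$
which does not work directly. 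Instead, for $e_2(d)>L^{1-\eta}$ the first term of Lemma \ref{lem-3} already dominates the third only when $e_2(d)\le L^{1/(1+1/r_1)\cdot(1/r_1)\cdots}$. So the cleanest route is this: for $e_2(d)\ge L^{c}$, use the trivial count $L^{1/r_1}$ times $\sum \mu(d)^2k^{3\omega(d)}e_2(d)^{-\varepsilon}/d\cdot L^{c\varepsilon}$. Balancing $c$ against $\varepsilon$ using Lemma \ref{lem-ET} is the delicate step I expect to require the most care.
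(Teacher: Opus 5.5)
The range of large $e_2(d)$ (your $S_3$) is a genuine gap, and balancing $c$ against $\varepsilon$ cannot close it. Your treatment of $S_1$ and of the range $e_2(d)\le L^{1-\eta}$ is fine.

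Once $e_2(d)\ge L$, the term $e_2(d)^{1/r_1}$ in Lemma~\ref{lem-3} is already as large as the trivial count $N:=(\log 2x/\log 2)^{1/r_1}\asymp L^{1/r_1}$. So on that range you only have the trivial count, and using it would require $\sum_{2\nmid d,\,P^+(d)\le L,\,e_2(d)>L^{c}}\mu(d)^2k^{3\omega(d)}/d\ll 1$. Nothing in the proposal gives this, and it is false at least for $c<1/2$:
\begin{itemize}
\item The full sum is $\prod_{2<p\le L}(1+k^3/p)\asymp(\log L)^{k^3}$.
\item The $d$ with $e_2(d)\le L^c$ are built from a set of fewer than $L^{2c}$ primes, namely those dividing $\prod_{n\le L^c}(2^n-1)$. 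So they carry at most about a $(2c)^{k^3}$ fraction of that mass.
\end{itemize}
You would therefore lose the factor $(\log\log x)^{k^3}$ that the lemma exists to save.

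Your Rankin-type insertion also runs the wrong way. For $e_2(d)\ge L^c$ the valid inequality is $1\le(e_2(d)/L^c)^{\varepsilon}$. That produces $e_2(d)^{+\varepsilon}$, which cannot be fed into Lemma~\ref{lem-ET}. The factor $e_2(d)^{-\varepsilon}L^{c\varepsilon}$ you wrote dominates $1$ only when $e_2(d)\le L^c$, and even then it would give $L^{1/r_1+c\varepsilon}$.

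What is missing is an elementary spacing bound for the inner count when $e_2(d)$ is large. For odd $d$, $2^a\equiv 2^b\pmod{d}$ iff $a\equiv b\pmod{e_2(d)}$. So for two solutions $m<m'\le N$, the difference $\lfloor m'^{r_1}\rfloor-\lfloor m^{r_1}\rfloor$ is a positive multiple of $e_2(d)$. It is also at most $r_1N^{r_1-1}(m'-m)+1$. Hence solutions are $\gg e_2(d)/L^{1-1/r_1}$ apart, and the count is $\ll L/e_2(d)+1$. With your split at $L^{1-\eta}$ and $0<\eta<1/r_1$, the range $e_2(d)>L^{1-\eta}$ contributes $\ll (L^{\eta}+1)(\log L)^{k^3}=o(L^{1/r_1})$, which completes your argument.

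The paper uses the same spacing idea as a dichotomy:
\begin{itemize}
\item The $d$ whose maximal count is at most $L^{1/r_1}(\log L)^{-k^3}$ contribute $\ll L^{1/r_1}$ by Mertens.
\item For the remaining $d$, pigeonholing the gaps between solutions forces $e_2(d)\ll L^{1-1/r_1}(\log L)^{k^3}$.
\item Lemma~\ref{lem-3} is applied only on this restricted range, where $S_3\ll L^{1/r_1-1/r_1^2}(\log L)^{O(k^3)}$.
\end{itemize}

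Finally, $S_2$ needs no truncation. The van der Corput pairs $\big(\frac{1}{2^{q+2}-2},1-\frac{q+1}{2^{q+2}-2}\big)$ satisfy $\kappa(r_1-1)<1-\lambda$ whenever $q>r_1-2$; the paper takes $q=\lfloor r_1\rfloor+1$.
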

\begin{proof}
Let $\mathcal{D}$ be the set of $d$ such that $2\nmid d$ and 
$$
\max_{f}\sum_{\substack{2^{ \lfloor m_1^{r_1}\rfloor}\equiv f\pmod{d}\\ m_1\le (\log 2x/\log 2)^{\frac{1}{r_1}}}}1 \le \frac{(\log x)^{\frac{1}{r_1}}}{(\log\log x)^{k^3}}.
$$
By the Mertens estimate, we have
\begin{align*}
\prod_{p\le \log x}\left(1+\frac{k^3}{p}\right)&=\exp\left(\sum_{p\le \log x}\log (1+k^3/p)\right)<\exp\left(k^3\sum_{p\le \log x}1/p\right)\ll (\log\log x)^{k^3},
\end{align*}
from which we deduce that
\begin{align*}
\sum_{\substack{d=1,\ d\in \mathcal{D} \\P^+(d)\le \log x}}^{\infty}\frac{\mu(d)^2 k^{3\omega(d)}}{d}\max_{f}\sum_{\substack{2^{ \lfloor m_1^{r_1}\rfloor}\equiv f\pmod{d}\\ m_1\le (\log 2x/\log 2)^{\frac{1}{r_1}}}}1  &\le \frac{(\log x)^{\frac{1}{r_1}}}{(\log\log x)^{k^3}}\sum_{\substack{d=1,\ d\in \mathcal{D} \\P^+(d)\le \log x}}^{\infty}\frac{\mu(d)^2 k^{3\omega(d)}}{d}\\
&\le \frac{(\log x)^{\frac{1}{r_1}}}{(\log\log x)^{k^3}}\prod_{p\le \log x}\left(1+\frac{k^3}{p}\right)\\
&\ll (\log x)^{\frac{1}{r_1}}.
\end{align*}

Now, for any $2\nmid d$ with $d\not\in \mathcal{D}$, there is some $f_d$ such that
\begin{align}\label{eq-lem-4-1}
\sum_{\substack{2^{ \lfloor m_1^{r_1}\rfloor}\equiv f_d\pmod{d}\\ m_1\le (\log 2x/\log 2)^{\frac{1}{r_1}}}}1 > \frac{(\log x)^{\frac{1}{r_1}}}{(\log\log x)^{k^3}}.
\end{align}
The congruence equation
$$
2^{ \lfloor m_1^{r_1}\rfloor}\equiv f_d\pmod{d}
$$
implies that there is some $1\le y_d< e_2(d)$ such that
\begin{align}\label{eq-lem-4-2}
\lfloor m_1^{r_1}\rfloor\equiv y_d\pmod{e_2(d)}.
\end{align}
By (\ref{eq-lem-4-1}), the number of $m_1$ satisfying the congruence is more than $\frac{(\log x)^{\frac{1}{r_1}}}{(\log\log x)^{k^3}}$. In this situation, we are going to show 
$$
e_2(d)\ll (\log x)^{1-\frac{1}{r_1}}(\log\log x)^{k^3}.
$$
In fact, let $M$ be the least gap between those $m_1$'s with $m_1\ge (\log x)^{\frac{1}{2r_1}}$ which satisfy congruence (\ref{eq-lem-4-2}). Then, clearly
$$
\frac{(\log x)^{\frac{1}{r_1}}}{(\log\log x)^{k^3}}\le (\log x)^{\frac{1}{2r_1}}+\frac{(\log 2x/\log 2)^{\frac{1}{r_1}}}{M}+1,
$$
which means
$$
M\le 2(\log\log x)^{k^3}.
$$
Moreover, by the definition of $M$ we have
$$
\lfloor m_1+M\rfloor^{r_1}-\lfloor m_1\rfloor^{r_1}\ge e_2(d)
$$
for any $m_1\ge (\log x)^{\frac{1}{2r_1}}$. Hence, 
$$
e_2(d)\le (m_1+M)^{r_1}-m_1^{r_1}+1\ll_{r_1} m_1^{r_1-1}M\ll (\log x)^{1-\frac{1}{r_1}}(\log\log x)^{k^3}.
$$

By the above analysis, we are left over to prove
\begin{align}\label{eq-lem-4-3}
\sum_{\substack{d=1,\ 2\nmid d \\P^+(d)\le \log x\\ e_2(d)\ll (\log x)^{1-r_1^{-1}}(\log\log x)^{k^3}}}^{\infty}\frac{\mu(d)^2 k^{3\omega(d)}}{d}\max_{f}\sum_{\substack{2^{ \lfloor m_1^{r_1}\rfloor}\equiv f\pmod{d}\\ m_1\le (\log 2x/\log 2)^{\frac{1}{r_1}}}}1\ll 
(\log x)^{\frac{1}{r_1}}.
\end{align}
We are in a position to use Lemma \ref{lem-3}. For any exponent pair $(\kappa, \lambda)$, we have
\begin{align*}
\sum_{\substack{d=1,\ 2\nmid d \\P^+(d)\le \log x\\ e_2(d)\ll (\log x)^{1-r_1^{-1}}(\log\log x)^{k^3}}}^{\infty}\frac{\mu(d)^2 k^{3\omega(d)}}{d}\max_{f}\sum_{\substack{2^{ \lfloor m_1^{r_1}\rfloor}\equiv f\pmod{d}\\ m_1\le (\log 2x/\log 2)^{\frac{1}{r_1}}}}1\ll S_1+S_2+S_3,
\end{align*}
where
$$
S_1=(\log x)^{\frac{1}{r_1}}\sum_{\substack{d=1,\ 2\nmid d \\P^+(d)\le \log x}}^{\infty}\frac{\mu(d)^2 k^{3\omega(d)}}{de_2(d)},
$$
$$
S_2=(\log x)^{\frac{r_1\kappa+\lambda}{r_1(1+\kappa)}}\sum_{\substack{d=1,\ 2\nmid d \\P^+(d)\le \log x}}^{\infty}\frac{\mu(d)^2 k^{3\omega(d)}}{de_2(d)^{\frac{\kappa}{1+\kappa}}},
$$
and
$$
S_3=\sum_{\substack{d=1,\ 2\nmid d \\P^+(d)\le \log x\\ e_2(d)\ll (\log x)^{1-r_1^{-1}}(\log\log x)^{k^3}}}^{\infty}\frac{\mu(d)^2 k^{3\omega(d)}e_2(d)^{\frac{1}{r_1}}}{d}.
$$
By Lemma \ref{lem-ET} with $B=k^3$ and $\varepsilon=1$, clearly we have
$
S_1\ll (\log x)^{\frac{1}{r_1}}.
$
It is also easy to bound $S_3$ as follows:
\begin{align*}
S_3&\ll \left((\log x)^{1-r_1^{-1}}(\log\log x)^{k^3}\right)^{\frac{1}{r_1}}\sum_{\substack{d=1,\ 2\nmid d \\P^+(d)\le \log x}}^{\infty}\frac{\mu(d)^2 k^{3\omega(d)}}{d}\\
&\le (\log x)^{\frac{1}{r_1}-\frac{1}{r_1^2}}(\log\log x)^{\frac{k^3}{r_1}}\prod_{p\le \log x}\left(1+\frac{k^3}{p}\right)\\
&\ll (\log x)^{\frac{1}{r_1}-\frac{1}{r_1^2}}(\log\log x)^{\frac{k^3}{r_1}+k^3}\\
&\ll (\log x)^{\frac{1}{r_1}}.
\end{align*}
For the estimate of $S_2$, using Lemma \ref{lem-ET} again we get
$$
S_2\ll (\log x)^{\frac{r_1\kappa+\lambda}{r_1(1+\kappa)}}.
$$
To complete the proof of (\ref{eq-lem-4-3}), it remains to pick an exponent pair $(\kappa, \lambda)$ such that
$$
\frac{r_1\kappa+\lambda}{1+\kappa}<1.
$$
For any $q\ge 2$ and $Q=2^q$, we know from \cite[p. 60]{Graham} that
$$
\left(\frac{1}{4Q-2},1-\frac{q+1}{4Q-2}\right)
$$
is an exponent pair. We now choose $q=\lfloor r_1\rfloor+1$. Then
$$
\frac{r_1\kappa+\lambda}{1+\kappa}<r_1\kappa+\lambda=\frac{r_1}{4Q-2}+1-\frac{q+1}{4Q-2}=1-\frac{2-(r_1-\lfloor r_1\rfloor)}{4Q-2}<1,
$$
completing the proof of (\ref{eq-lem-4-3}), and hence the whole lemma.
\end{proof}

The following technical lemma is quoted from Chen \cite[Theorem 3.1]{Chen-Xu}.

\begin{lemma}\label{lem-Chen}
Let $N$ and $r$ be positive integers, $a$ an integer, $K$ a positive number and $\varepsilon>0$. Then

{\rm (i)} the number of solutions of $y^{r}\equiv a\pmod{N}~(1\le y\le K)$ is less than 
$$
c\left(KN^{-\frac{1}{r}}+N^\varepsilon\right),
$$
where $c$ is a positive constant depending only on $r$ and $\varepsilon$;

{\rm (ii)} if $N\ge K^r$, the equation $y^{r}\equiv a\pmod{N}~(1\le y\le K)$ has at most one solution.
\end{lemma}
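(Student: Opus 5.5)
The statement is elementary, so the plan is to prove (ii) directly and to get (i) by a local analysis at each prime power dividing $N$, glued together with the Chinese remainder theorem.

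\textbf{Part (ii).} This is immediate. Suppose $1\le y_1<y_2\le K$ are two solutions. Then $N\mid y_2^r-y_1^r$. On the other hand
$$
0<y_2^r-y_1^r\le K^r-1<N,
$$
which is a contradiction.

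\textbf{Part (i): reduction to prime powers.} Write $N=\prod_{p^e\| N}p^e$. The key claim is that for each $p^e\|N$ there is a divisor $q_p$ of $p^e$ with $q_p\ge p^{e/r}$ such that every solution $y$ of $y^r\equiv a\pmod{p^e}$ lies in at most $C=C(r)$ residue classes modulo $q_p$. Granting this, the Chinese remainder theorem shows that all solutions of $y^r\equiv a\pmod N$ lie in at most $C^{\omega(N)}$ residue classes modulo
$$
Q=\prod_{p\mid N}q_p\ge N^{1/r}.
$$
Each class meets $[1,K]$ in at most $K/Q+1$ integers. Hence the number of solutions is at most
$$
C^{\omega(N)}\bigl(KN^{-1/r}+1\bigr).
$$
Finally, the divisor bound $C^{\omega(N)}\ll_{r,\varepsilon}N^{\varepsilon}$ (since $\omega(N)\ll \log N/\log\log N$) gives $c\bigl(KN^{-1/r}+N^{\varepsilon}\bigr)$, after enlarging $c$.

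\textbf{The local claim.} This is the main step, and I will split into three cases according to $j=v_p(a)$.

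\emph{Case $j=0$.} Then $y$ is a unit, and the number of solutions modulo $p^e$ is at most the number of $r$-th roots of unity in $(\mathbb{Z}/p^e\mathbb{Z})^*$. This is $\gcd(r,\varphi(p^e))\le r$ for odd $p$, because the group is cyclic. For $p=2$ the group is $C_2\times C_{2^{e-2}}$, giving at most $2r$. So take $q_p=p^e$ and $C=2r$.

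\emph{Case $j\ge e$.} Then $p^e\mid y^r$ forces $y\equiv0\pmod{p^{\lceil e/r\rceil}}$. Take $q_p=p^{\lceil e/r\rceil}\ge p^{e/r}$, with a single class.

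\emph{Case $0<j<e$.} Since $v_p(y^r)=v_p(a)=j$, solvability forces $r\mid j$ and $y=p^{j/r}z$ with $p\nmid z$. Then $z^r\equiv a/p^j\pmod{p^{e-j}}$, so by the first case $z$ lies in at most $2r$ classes modulo $p^{e-j}$. Hence $y$ lies in at most $2r$ classes modulo
$$
q_p=p^{\,j/r+e-j},
$$
which divides $p^e$. Moreover $q_p\ge p^{e/r}$, because $j/r+e-j-e/r=(e-j)(1-1/r)\ge0$.

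The only care needed is the $p=2$ count and the case $p\mid r$. Both are absorbed by the crude bound $2r$ on the number of $r$-th roots of unity in $(\mathbb{Z}/p^e\mathbb{Z})^*$, so I expect no real obstacle beyond this bookkeeping.
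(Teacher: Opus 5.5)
\textbf{Gap in the last step of part (i).} Your part (ii) is correct, and so is your local case analysis at each $p^e\| N$. (The paper gives no proof of this lemma; it is quoted from Chen--Xu.) The problem is the final step of (i), which does not follow. From ``at most $C^{\omega(N)}$ classes modulo $Q\ge N^{1/r}$'' you get at most $C^{\omega(N)}(KN^{-1/r}+1)$ solutions. The factor $C^{\omega(N)}$ multiplies the main term $KN^{-1/r}$, not just the $1$.

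It cannot be absorbed into a constant depending only on $r,\varepsilon$, because $\omega(N)$ is unbounded. The divisor bound only turns your estimate into $\ll_{r,\varepsilon}KN^{-1/r+\varepsilon}+N^{\varepsilon}$, which is weaker than the claim. Moreover, the local claim as you formulated it (at most $C$ classes modulo some $q_p\ge p^{e/r}$) is consistent with $Q=N^{1/r}$ and $C^{\omega(N)}$ classes. So no bookkeeping at the end can recover the stated bound from that claim alone.

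\textbf{How to repair it.} The fix is already contained in your case analysis: several classes occur only at primes where $q_p$ is noticeably larger than $p^{e/r}$.

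First treat $r=1$ separately: $y\equiv a\pmod N$ is a single class, giving at most $K/N+1$ solutions.

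For $r\ge 2$, let $S$ be the set of $p\mid N$ with $j=v_p(a)<e$.
\begin{itemize}
\item If $p\notin S$, the solutions form exactly one class modulo $p^{\lceil e/r\rceil}\ge p^{e/r}$.
\item If $p\in S$, you have at most $2r$ classes modulo $q_p=p^{j/r+e-j}$ (this covers $j=0$ too). Here $q_p= p^{e/r}\cdot p^{(e-j)(1-1/r)}\ge p^{e/r+1/2}$.
\end{itemize}
Hence $Q\ge N^{1/r}\prod_{p\in S}p^{1/2}$, and the number of solutions is at most
\[
(2r)^{|S|}\Big(\frac{K}{Q}+1\Big)\le KN^{-1/r}\prod_{p\in S}\frac{2r}{\sqrt p}+(2r)^{\omega(N)}\le (2r)^{4r^2}KN^{-1/r}+(2r)^{\omega(N)}.
\]
The last inequality holds because the factors with $p\ge 4r^2$ are at most $1$ and each remaining factor is below $2r$. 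The divisor bound applied to $(2r)^{\omega(N)}$ then gives (i).

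\textbf{Remark on the application.} The weaker bound your argument actually yields would still suffice for the paper's only use of this lemma, Lemma \ref{last-lem}. Lemma \ref{lem-ET} can be applied there with exponent $1/r_1-\varepsilon$ in place of $1/r_1$. It is not, however, the lemma as stated.
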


\begin{lemma}\label{last-lem}
Let $k>0 $ be a given number. Suppose that $r_1$ is a positive integer. Then we have
$$
\sum_{\substack{d=1,\ 2\nmid d \\P^+(d)\le \log x}}^{\infty}\frac{\mu(d)^2 k^{3\omega(d)}}{d}\max_{f}\sum_{\substack{2^{ m_1^{r_1}}\equiv f\pmod{d}\\ m_1\le (\log 2x/\log 2)^{\frac{1}{r_1}}}}1\ll 
(\log x)^{\frac{1}{r_1}},
$$
provided that $x$ is sufficiently large, where the implied constant depends at most on $r_1$.
\end{lemma}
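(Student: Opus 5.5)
Fix $d$ odd and squarefree with $P^+(d)\le \log x$, and write $K=(\log 2x/\log 2)^{1/r_1}$. The congruence $2^{m_1^{r_1}}\equiv f\pmod d$ forces $m_1^{r_1}\equiv y\pmod{e_2(d)}$ for a single residue $y$ depending on $f$ and $d$. So the inner maximum is at most the maximum over $a$ of the number of $1\le m_1\le K$ with $m_1^{r_1}\equiv a\pmod{e_2(d)}$. By Lemma \ref{lem-Chen}(i) with $N=e_2(d)$, and with some small $\varepsilon$ to be fixed later (say $\varepsilon<1/(2r_1)$), this number is
$$
\ll K e_2(d)^{-1/r_1}+e_2(d)^{\varepsilon}.
$$
When $e_2(d)\ge K^{r_1}\asymp \log x$, Lemma \ref{lem-Chen}(ii) gives instead the bound $1$. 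For $r_1=1$ the count is simply $\le K/e_2(d)+1$, which fits the same pattern.

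I would then split the sum over $d$ according to whether $e_2(d)\le \log x$ or $e_2(d)>\log x$.

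\emph{Terms with $e_2(d)>\log x$.} Each inner count is $\le 1$, so these terms contribute at most
$$
\sum_{P^+(d)\le\log x}\frac{\mu(d)^2k^{3\omega(d)}}{d}\le\prod_{p\le \log x}\Big(1+\frac{k^3}{p}\Big)\ll(\log\log x)^{k^3}\ll(\log x)^{1/r_1}.
$$
This is the Mertens estimate already used in the proof of Lemma \ref{lem-4}.

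\emph{Terms with $e_2(d)\le\log x$.} The first part of the bound contributes
$$
K\sum_{d}\frac{\mu(d)^2k^{3\omega(d)}}{d\,e_2(d)^{1/r_1}}\ll(\log x)^{1/r_1},
$$
by Lemma \ref{lem-ET} with $B=k^3$ and exponent $1/r_1$ in place of $\varepsilon$. The second part contributes
$$
(\log x)^{\varepsilon}\prod_{p\le\log x}\Big(1+\frac{k^3}{p}\Big)\ll(\log x)^{\varepsilon}(\log\log x)^{k^3},
$$
which is $\ll(\log x)^{1/r_1}$ once $\varepsilon<1/r_1$.

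The only step needing care is the passage from the congruence modulo $d$ to the congruence modulo $e_2(d)$. Since $d$ is odd, $2$ is invertible modulo $d$, so the map $n\mapsto 2^n\bmod d$ has period exactly $e_2(d)$. Hence the fibre over $f$ is a single residue class $y \pmod{e_2(d)}$ (or empty). Lemma \ref{lem-Chen} also needs $e_2(d)$ to be a positive integer, which it is; the case $d=1$ is trivial since $e_2(1)=1$. No exponent-pair input is needed here, unlike in Lemma \ref{lem-4}, because Lemma \ref{lem-Chen} handles integer exponents directly. Adding the three contributions gives the stated bound $\ll(\log x)^{1/r_1}$.
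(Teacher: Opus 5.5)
Your argument is correct and is essentially the paper's proof: you pass to the congruence $m_1^{r_1}\equiv a \pmod{e_2(d)}$, apply Lemma \ref{lem-Chen}(i), sum the main term with Lemma \ref{lem-ET} (exponent $1/r_1$), and bound the $N^{\varepsilon}$ term by the Mertens product; your explicit use of part (ii) for large $e_2(d)$ is in fact what justifies the paper's tacit replacement of $e_2(d)^{\varepsilon}$ by $(\log x)^{\varepsilon}$. One trivial fix: (ii) requires $e_2(d)\ge K^{r_1}=\log 2x/\log 2$, which exceeds $\log x$, so you should split at $K^{r_1}$ rather than at $\log x$ (alternatively, note that for $\log x<e_2(d)<K^{r_1}$ part (i) still gives $O((\log x)^{\varepsilon})$).
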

\begin{proof}
For any given $f$, if there is at least one $m_i$ satisfy the congruence
$$
2^{  m_1^{r_1}}\equiv f\pmod{d},
$$
then there exists some $1\le a< e_2(d)$ such that
\begin{align*}
m_1^{r_1}\equiv a\pmod{e_2(d)}
\end{align*}
for all the solutions $m_1\le (\log 2x/\log 2)^{\frac{1}{r_1}}$. By Lemma \ref{lem-Chen} we have
\begin{align*}
\max_{f}\sum_{\substack{2^{ m_1^{r_1}}\equiv f\pmod{d}\\ m_1\le (\log 2x/\log 2)^{\frac{1}{r_1}}}}1\le c\left(\left(\frac{\log x}{e_2(d)}\right)^{\frac{1}{r_1}}+(\log x)^\varepsilon\right),
\end{align*}
where $\varepsilon>0$ is arbitrarily small and $c>0$ is a constant depending on $r_1$ and $\varepsilon$. Hence,
\begin{align}\label{eq-Chen-Xu-1}
\sum_{\substack{d=1,\ 2\nmid d \\P^+(d)\le \log x}}^{\infty}&\frac{\mu(d)^2 k^{3\omega(d)}}{d}\max_{f}\sum_{\substack{2^{ m_1^{r_1}}\equiv f\pmod{d}\\ m_1\le (\log 2x/\log 2)^{\frac{1}{r_1}}}}1\nonumber\\
&\ll (\log x)^{\frac{1}{r_1}}\sum_{\substack{d=1,\ 2\nmid d \\P^+(d)\le \log x}}^{\infty}\frac{\mu(d)^2 k^{3\omega(d)}}{d\big(e_2(d)\big)^{\frac{1}{r_1}}}+(\log x)^\varepsilon\sum_{\substack{d=1,\ 2\nmid d \\P^+(d)\le \log x}}^{\infty}\frac{\mu(d)^2 k^{3\omega(d)}}{d}.
\end{align}
By Lemma \ref{lem-ET} with $B=k^3$ and $\varepsilon=\frac{1}{r_1}$, the first term in the right-hand side of (\ref{eq-Chen-Xu-1}) is clearly $\ll (\log x)^{\frac{1}{r_1}}$. Using again the same argument as in Lemma \ref{lem-4}, we have
$$
\sum_{\substack{d=1,\ 2\nmid d \\P^+(d)\le \log x}}^{\infty}\frac{\mu(d)^2 k^{3\omega(d)}}{d}\le \prod_{p\le \log x}\left(1+\frac{k^3}{p}\right)\ll (\log\log x)^{k^3},
$$
giving the desired estimate of the second term in the right-hand side of (\ref{eq-Chen-Xu-1}).
\end{proof}

Now, we are ready to proceed with the proof of Theorem \ref{propo:2}.
\begin{proof}[Proof of Theorem \ref{propo:2}]
We prove the theorem by induction on $k$. First, for $k=1$ by the prime number theorem we clearly have
\begin{align*}
\sum_{n\le x}f_{\mathcal{A}}(n)=\sum_{\substack{p+a\le x\\ p\in \mathbb{P},\ a\in \mathcal{A}}}1
\le \Big(\sum_{p\le x}1\Big) \Big(\sum_{a\le x,\ a\in \mathcal{A}}1\Big)\ll \frac{x\mathcal{A}(x)}{\log x}.
\end{align*}
By the definition (\ref{eq-1-1}) of $\mathcal{A}$, we have
\begin{align}\label{eq-add-1}
\mathcal{A}(x)=\sum_{\substack{2^{ \lfloor m_1^{r_1}\rfloor}+\cdots+2^{\lfloor m_t^{r_t}\rfloor}\le x\\m_i\in \mathbb{N},\ 1\le i\le t}}1\le \prod_{1\le i\le t}\sum_{2^{\lfloor m_i^{r_i}\rfloor}\le x}1\ll (\log x)^{\frac{1}{r_1}+\cdots+\frac{1}{r_t}}.
\end{align}
Hence, we get
$$
\sum_{n\le x}f_{\mathcal{A}}(n)\ll x(\log x)^{\frac{1}{r_1}+\cdots+\frac{1}{r_t}-1},
$$
proving the case $k=1$.

Now, suppose that our theorem is true for all the integers $<k$. It suffices to prove the case $k$. In what follows, $p$'s will always be primes and $a$'s  are the elements of $\mathcal{A}$. Expanding the $k$-th moment of the representation function we will get
\begin{align}\label{eq-pro-1}
\sum_{n\le x}f_{\mathcal{A}}(n)^k=\sum_{p_1+a_1=\cdots=p_k+a_k\le x}1.
\end{align}
The primes above in the summation (\ref{eq-pro-1}) can be separated into a few groups as follows:
\begin{align}\label{eq-pro-2}
p_{11}=p_{12}=\cdots=p_{1s_1},\ p_{21}=p_{22}=\cdots=p_{2s_2},\ \cdots,\ p_{\ell1}=p_{\ell2}=\cdots=p_{\ell s_\ell},
\end{align}
where $\ell\le k$ and $s_1+s_2+\cdots+s_\ell=k$. For any $\ell<k$, the contribution of situation (\ref{eq-pro-2}) in the sum (\ref{eq-pro-1}) could be bounded as 
\begin{align}\label{eq-pro-3}
\ell !\sum_{a_{11}<a_{21}<\cdots<a_{\ell 1}\le x}~\sum_{\substack{p_{\ell1}<\cdots<p_{21}<p_{\ell 1}\\a_{11}+p_{11}=\cdots=a_{\ell 1}+p_{\ell 1}\le x}}1&\le \ell !\sum_{n\le x}f_{\mathcal{A}}(n)^\ell\nonumber\\
&\ll \ell! \cdot x (\log x)^{\ell \left(\frac{1}{r_1}+\cdots+\frac{1}{r_t}-1\right)},
\end{align}
where the last inequality comes from the inductive hypothesis. In view of (\ref{eq-pro-1}) and (\ref{eq-pro-3}) we obtain
\begin{align}\label{eq-pro-4}
\sum_{n\le x}f_{\mathcal{A}}(n)^k&\le k! \sum_{a_{1}<a_{2}<\cdots<a_{k}\le x}~\sum_{\substack{p_{k}<\cdots<p_{2}<p_{1}\\a_{1}+p_{1}=\cdots=a_{k}+p_{k}\le x}}1+\sum_{\ell=1}^{k-1}\ell! \cdot x (\log x)^{\ell \left(\frac{1}{r_1}+\cdots+\frac{1}{r_t}-1\right)}\nonumber\\
&\le k! \sum_{a_{1}<a_{2}<\cdots<a_{k}\le x}~\sum_{\substack{p_{k}<\cdots<p_{2}<p_{1}\\a_{1}+p_{1}=\cdots=a_{k}+p_{k}\le x}}1+k!\cdot x (\log x)^{k\left(\frac{1}{r_1}+\cdots+\frac{1}{r_t}-1\right)},
\end{align}
where the last inequality is guaranteed by the condition $\frac{1}{r_1}+\cdots+\frac{1}{r_t}\ge 1$.
Now, we see from (\ref{eq-pro-4}) that to complete the proof of our theorem, it would suffice to show
\begin{align}\label{eq-pro-5}
\sum_{a_{1}<a_{2}<\cdots<a_{k}\le x}~\sum_{\substack{p_{k}<\cdots<p_{2}<p_{1}\\a_{1}+p_{1}=\cdots=a_{k}+p_{k}\le x}}1\ll x (\log x)^{k\left(\frac{1}{r_1}+\cdots+\frac{1}{r_t}-1\right)}.
\end{align}

It is clear that
\begin{align*}
\sum_{\substack{p_{k}<\cdots<p_{2}<p_{1}\\a_{1}+p_{1}=\cdots=a_{k}+p_{k}\le x}}1\le \sum_{\substack{p_k\le x\\ p_i=p_k+a_k-a_i,\ 1\le i\le k-1}}1.
\end{align*}
By Lemma \ref{lem-2} with $g=k-1$ and $q_i=1$ we get
\begin{align*}
\sum_{\substack{p_{k}<\cdots<p_{2}<p_{1}\\a_{1}+p_{1}=\cdots=a_{k}+p_{k}\le x}}1\ll \prod_{p|E}\left(1-\frac{1}{p}\right)^{\rho(p)-k+1}\prod_{p|b_1\cdots b_{k-1}}\left(1-\frac{1}{p}\right)^{-1}\frac{x}{(\log x)^{k}},
\end{align*}
where $b_i=a_k-a_i$ and 
$$
E=\prod_{1\le h<s\le k-1}\big(b_s-b_h\big)=\prod_{1\le h<s\le k-1}\big(a_h-a_s\big).
$$
Therefore, we conclude that
\begin{align}\label{eq-pro-6}
\sum_{a_{1}<a_{2}<\cdots<a_{k}\le x}~\sum_{\substack{p_{k}<\cdots<p_{2}<p_{1}\\a_{1}+p_{1}=\cdots=a_{k}+p_{k}\le x}}1&\ll \frac{x}{(\log x)^{k}}\sum_{a_{1}<a_{2}<\cdots<a_{k}\le x}\prod_{p|E_k}\left(1-\frac{1}{p}\right)^{-k}\nonumber\\
&\ll \frac{x}{(\log x)^{k}}\sum_{a_{1}<a_{2}<\cdots<a_{k}\le x}\prod_{p|E_k}\left(1+\frac{k}{p}\right),
\end{align}
where 
$$
E_k=Eb_1b_2\cdots b_{k-1}=\prod_{1\le h<s\le k}\big(a_s-a_h\big).
$$

Trivially, the right-hand side of (\ref{eq-pro-6}) is $\ll x(\log x)^{k\left(\frac{1}{r_1}+\cdots+\frac{1}{r_t}-1\right)}(\log\log x)^k$ since 
$$
\prod_{p|E_k}\left(1+\frac{k}{p}\right)\ll (\log\log x)^k.
$$
Thus, we have to earn a saving of $(\log\log x)^k$ factor in the estimate to fulfill our wish. Following Erd\H os'  idea \cite[Theorem 2]{Erdos2}, we employ the inequality of the geometric and power means 
$$
x_1x_2\cdots x_m\le \frac{x_1^m+x_2^m+\cdots+x_m^m}{m}
$$
to our case, and then we will get
\begin{align}\label{eq-pro-7}
\prod_{p|E_k}\left(1+\frac{k}{p}\right)\le \frac{1}{\binom{k}{2}}\sum_{1\le h<s\le k}\prod_{p|(a_s-a_h)}\left(1+\frac{k}{p}\right)^{\binom{k}{2}}\ll \sum_{1\le h<s\le k}\prod_{p|(a_s-a_h)}\left(1+\frac{k^3}{p}\right),
\end{align}
where the implied constant depends on $k$. Taking (\ref{eq-pro-7}) into (\ref{eq-pro-6}) we obtain
\begin{align*}
\sum_{a_{1}<a_{2}<\cdots<a_{k}\le x}~\sum_{\substack{p_{k}<\cdots<p_{2}<p_{1}\\a_{1}+p_{1}=\cdots=a_{k}+p_{k}\le x}}1&\ll \frac{x}{(\log x)^{k}}\sum_{a_{1}<a_{2}<\cdots<a_{k}\le x}~\sum_{1\le h<s\le k}\prod_{p|(a_s-a_h)}\left(1+\frac{k^3}{p}\right)
\nonumber\\
&\le \frac{x}{(\log x)^{k}}\binom{k}{2}\mathcal{A}(x)^{k-2}\sum_{a_1<a_2}\prod_{p|(a_1-a_2)}\left(1+\frac{k^3}{p}\right).
\end{align*}
Now, using the bound (\ref{eq-add-1}) of $\mathcal{A}(x)$ we have
\begin{align*}
\sum_{a_{1}<a_{2}<\cdots<a_{k}\le x}~\sum_{\substack{p_{k}<\cdots<p_{2}<p_{1}\\a_{1}+p_{1}=\cdots=a_{k}+p_{k}\le x}}1\ll x(\log x)^{(k-2)\left(\frac{1}{r_1}+\cdots+\frac{1}{r_t}\right)-k}\sum_{a_1<a_2\le x}\prod_{p|(a_1-a_2)}\left(1+\frac{k^3}{p}\right).
\end{align*}
Hence, to complete the proof of (\ref{eq-pro-5}) it remains to show
\begin{align*}
\sum_{a_1<a_2\le x}\prod_{p|(a_1-a_2)}\left(1+\frac{k^3}{p}\right)\ll (\log x)^{2\left(\frac{1}{r_1}+\cdots+\frac{1}{r_t}\right)}.
\end{align*}
We now make use of a trick of Elsholtz, Luca and Planitzer \cite{Elsholtz-Luca-Plantizer}. We separate the primes $p$ in the product of (\ref{eq-pro-8}) according to $p>\log x$ or $p\le \log x$. For $a_1,a_2\le x$, it is not hard to see
$$
\prod_{\substack{p|(a_1-a_2)\\ p>\log x}}\left(1+\frac{k^3}{p}\right)\le \left(1+\frac{k^3}{\log x}\right)^{\frac{\log x}{\log 2}}\ll 1.
$$
Thus, we only need to prove
\begin{align}\label{eq-pro-8}
\sum_{a_1<a_2\le x}\prod_{\substack{p|(a_1-a_2)\\2<p\le \log x}}\left(1+\frac{k^3}{p}\right)\ll (\log x)^{2\left(\frac{1}{r_1}+\cdots+\frac{1}{r_t}\right)}.
\end{align}

Expanding the product and then exchanging the order of summations, we have
\begin{align}\label{eq-pro-9}
\sum_{a_1<a_2\le x}\prod_{\substack{p|(a_1-a_2)\\2<p\le \log x}}\left(1+\frac{k^3}{p}\right)\le 
\sum_{\substack{d=1 \\P^+(d)\le \log x\\ 2\nmid d}}^{\infty}\frac{\mu(d)^2 k^{3\omega(d)}}{d}\sum_{\substack{a_1<a_2\le x\\ a_1\equiv a_2\pmod{d}}}1,
\end{align}
where $\mu(d)$ is the M\"obius function, $\omega(d)$ is the number of different prime factors of $d$, and $P^+(d)$ is the largest prime factor of $d$. By the definition of $\mathcal{A}$, we know
\begin{align*}
\sum_{\substack{a_1<a_2\le x\\ a_1\equiv a_2\pmod{d}}}1&=\sum_{\substack{2^{ \lfloor m_1^{r_1}\rfloor}+\cdots+2^{\lfloor m_t^{r_t}\rfloor}\equiv 2^{ \lfloor n_1^{r_1}\rfloor}+\cdots+2^{\lfloor n_t^{r_t}\rfloor}\pmod{d}\\2^{ \lfloor m_1^{r_1}\rfloor}+\cdots+2^{\lfloor m_t^{r_t}\rfloor}<2^{ \lfloor n_1^{r_1}\rfloor}+\cdots+2^{\lfloor n_t^{r_t}\rfloor}\le x\\m_i, n_i\in \mathbb{N},\ 1\le i\le t}}1\nonumber\\
&\ll (\log x)^{2\left(\frac{1}{r_1}+\cdots+\frac{1}{r_t}\right)-\frac{1}{r_1}}\max_{f}\sum_{\substack{2^{ \lfloor m_1^{r_1}\rfloor}\equiv f\pmod{d}\\ m_1\le (\log 2x/\log 2)^{\frac{1}{r_1}}}}1,
\end{align*}
which together with (\ref{eq-pro-8}) implies 
\begin{align}\label{eq-pro-10}
\sum_{a_1<a_2\le x}&\prod_{\substack{p|(a_1-a_2)\\2<p\le \log x}}\left(1+\frac{k^3}{p}\right)\nonumber\\
&\ll (\log x)^{2\left(\frac{1}{r_1}+\cdots+\frac{1}{r_t}\right)-\frac{1}{r_1}}\sum_{\substack{d=1,\ 2\nmid d \\P^+(d)\le \log x}}^{\infty}\frac{\mu(d)^2 k^{3\omega(d)}}{d}\max_{f}\sum_{\substack{2^{ \lfloor m_1^{r_1}\rfloor}\equiv f\pmod{d}\\ m_1\le (\log 2x/\log 2)^{\frac{1}{r_1}}}}1.
\end{align}
We will separate the remaining argument into two cases.

{\it Case I.} At least one of $r_i$ is not an integer. Changing the order of $r_i$'s if necessary, we may assume without loss of generality that $r_1$ is not an integer. Then, by (\ref{eq-pro-10}) and Lemma \ref{lem-4},
\begin{align*}
\sum_{a_1<a_2\le x}&\prod_{\substack{p|(a_1-a_2)\\2<p\le \log x}}\left(1+\frac{k^3}{p}\right)\ll (\log x)^{2\left(\frac{1}{r_1}+\cdots+\frac{1}{r_t}\right)-\frac{1}{r_1}}(\log x)^{\frac{1}{r_1}}\ll (\log x)^{2\left(\frac{1}{r_1}+\cdots+\frac{1}{r_t}\right)},
\end{align*}
establishing (\ref{eq-pro-8}) in {\it Case I.}

{\it Case II.} All of the $r_i$'s are integers. By (\ref{eq-pro-10}) and Lemma \ref{last-lem}, we also have
\begin{align*}
\sum_{a_1<a_2\le x}&\prod_{\substack{p|(a_1-a_2)\\2<p\le \log x}}\left(1+\frac{k^3}{p}\right)\ll (\log x)^{2\left(\frac{1}{r_1}+\cdots+\frac{1}{r_t}\right)},
\end{align*}
completing the proof of (\ref{eq-pro-8}) in {\it Case II,} and hence the whole theorem.
\end{proof}

As we mentioned in Remark \ref{remark1}, if $p$ is a prime and $n=p+2$, then 
$
n+2=p+2^2.
$
This leads us to the following problem which we cannot solve at present whose positive answer can be viewed as the first step toward Conjecture \ref{conjecture-1}.

\begin{problem}
Prove unconditionally that
$$
\lim_{x\rightarrow\infty}\frac{\#\left\{n\le x: \text{both~} n~\text{and~} n+2 ~\text{can~be~written~as~} p+2^m\right\}}{x/\log x}=\infty.
$$
\end{problem}

\section*{Acknowledgments}
We thank Alfr\'ed R\'enyi Institute of Mathematics for providing us an excellent academic environment. We also thank the support from CSC programs.

\end{document}